\theoremstyle{definition}
\newtheorem{theorem}{Theorem}[section]
\newtheorem{lemma}[theorem]{Lemma}
\newtheorem{proposition}[theorem]{Proposition}
\newtheorem{definition}[theorem]{Definition}
\newtheorem{example}[theorem]{Example}
\newtheorem{corollary}[theorem]{Corollary}
\newcommand{\Z}{\mathbb{Z}}
\newcommand{\R}{\mathbb{R}}
\newcommand{\Q}{\mathbb{Q}}
\newcommand{\frakp}{\mathfrak{p}}
\newcommand{\Spec}{\text{Spec}}
\newcommand{\Ass}{\text{Ass}}
\newcommand{\Sing}{\text{Sing}}
\newcommand{\Min}{\text{Min}}
\newcommand{\depth}{\text{depth }}
\newcommand{\height}{\text{ht}}
\newenvironment{manualtheorem}[1]{%
  \manualtheoreminner
}{\endmanualtheoreminner}
\newcommand{\interior}[1]{%
  {\kern0pt#1}^{\mathrm{o}}%
}
\DeclarePairedDelimiter\abs{\lvert}{\rvert}
\title{Completions of Countable Excellent Domains and Countable Noncatenary Domains}
\author{S. Loepp and Teresa Yu}
\begin{document}

\maketitle

\begin{abstract}
    We find necessary and sufficient conditions for a complete local ring containing the rationals to be the completion of a countable excellent local (Noetherian) domain. Furthermore, we find necessary and sufficient conditions for a complete local ring to be the completion of a countable noncatenary local domain, as well as necessary and sufficient conditions for it to be the completion of a countable noncatenary local unique factorization domain.
\end{abstract}

\section{Introduction}

While the structure of complete local (Noetherian) rings is well-understood via Cohen's Structure Theorem, the structure of local rings that are not complete is much more mysterious. This asymmetry in understanding has spurred results focusing on the relationship between a local ring and its completion. Past results have characterized complete local rings that are completions of local rings with specific ring properties, such as being an integral domain (see \cite{lech}), being an excellent domain in the characteristic zero case (see \cite{loepp03}), and being a noncatenary domain or a noncatenary unique factorization domain (see \cite{SMALL17}). The authors of \cite{SMALL19} focus on characterizing completions of local domains with certain cardinalities. In particular, they characterize complete local rings that are the completion of a countable local domain. In this paper, we first extend this result to countable {\em excellent} local domains in the case where the complete local ring contains the rationals. In particular, in Section \ref{sec:countable_excellent}, we prove the following result.


\begin{manualtheorem}{\ref{thm:countable_excellent_dimgeq1_char}}
Let $T$ be a complete local ring with maximal ideal $M$ and suppose that $T$ contains the rationals. Then $T$ is the completion of a countable excellent local domain if and only if the following conditions hold:
\begin{enumerate}
    \item $T$ is equidimensional,
    \item $T$ is reduced, and
    \item $T/M$ is countable.
\end{enumerate}
\end{manualtheorem}

Using similar techniques that we use to prove this result, we characterize complete local rings that are the completion of a countable noncatenary local domain in Section~\ref{sec:noncat}. In particular, we prove the following theorem.

\begin{manualtheorem}{\ref{thm:countable_noncat_domain_char}}
Let $T$ be a complete local ring with maximal ideal $M$. Then $T$ is the completion of a countable noncatenary local domain if and only if the following conditions hold:
\begin{enumerate}
    \item no integer of $T$ is a zero divisor,
    \item $M\notin\Ass(T)$,
    \item there exists $P\in\Min(T)$ such that $1<\dim(T/P)<\dim T$, and
    \item $T/M$ is countable.
\end{enumerate}
\end{manualtheorem}

Finally, in Section \ref{sec:noncat}, we prove the following result characterizing completions of countable noncatenary local unique factorization domains.

\begin{manualtheorem}{\ref{thm:noncat_ufd_char}}
Let $T$ be a complete local ring with maximal ideal $M$. Then $T$ is the completion of a countable noncatenary local unique factorization domain if and only if the the following conditions hold:
\begin{enumerate}
    \item no integer of $T$ is a zero divisor,
    \item $\depth T>1$,
    \item there exists $P\in\Min(T)$ such that $2<\dim(T/P)<\dim T$, and
    \item $T/M$ is countable.
\end{enumerate}
\end{manualtheorem}

Interestingly, one can use these results to characterize complete local rings containing the rationals that are the completion of an uncountable excellent local domain with a countable spectrum, complete local rings that are the completion of an uncountable noncatenary local domain with a countable spectrum, and complete local rings that are the completion of an uncountable noncatenary local unique factorization domain with a countable spectrum.  These results appear in the forthcoming paper, ``Completions of Uncountable Local Rings with Countable Spectra," which will be posted on the arXiv soon.

The outline of this paper is as follows. In Section~\ref{sec:background}, we provide background on complete local rings and excellent rings. In Section~\ref{sec:countable_excellent}, we identify necessary and sufficient conditions on a complete local ring containing the rationals to be the completion of a countable excellent local domain. Using the techniques developed in this section, we then characterize completions of countable noncatenary local domains as well as completions of countable noncatenary local unique factorization domains in Section~\ref{sec:noncat}.

\section{Background}\label{sec:background}

Throughout this paper, all rings are commutative with unity. We say a ring is \textit{quasi-local} if it has a unique maximal ideal, and we say that it is \textit{local} if it is both quasi-local and Noetherian. We denote a quasi-local ring $R$ with unique maximal ideal $M$ by $(R,M)$, and we use $\widehat{R}$ to denote the completion of $R$ with respect to its maximal ideal when $R$ is local.  Finally, we use the standard abbreviation UFD to denote a unique factorization domain and the standard abbreviations $\mathbb{Z}$ to denote the integers, $\mathbb{Q}$ to denote the rationals, and $\mathbb{R}$ to denote the reals.

In \cite{lech}, Lech characterizes completions of integral domains by proving the following result.

\begin{theorem}[\cite{lech}, Theorem 1]\label{thm:lech_char_domain}
A complete local ring $(T,M)$ is the completion of a local domain if and only if
\begin{enumerate}
    \item no integer of $T$ is a zero divisor, and
    \item unless equal to $(0)$, $M\notin\Ass(T)$.
\end{enumerate}
\end{theorem}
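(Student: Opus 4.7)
The plan is to prove the two directions separately. For \emph{necessity}, suppose $T = \widehat{R}$ for a local domain $(R,\mathfrak{m})$. The completion map $R \hookrightarrow T$ is faithfully flat, so nonzerodivisors in $R$ remain nonzerodivisors in $T$. Since $R$ is a domain, every nonzero element of $R$ is a nonzerodivisor; in particular every integer of $T$ that is nonzero is a nonzerodivisor, yielding (1). For (2), if $\dim R \ge 1$ then $R$, being a positive-dimensional domain, contains a nonzerodivisor in its maximal ideal, so $\depth R \ge 1$; depth is preserved under completion, so $\depth T \ge 1$ and hence $M \notin \Ass(T)$. If instead $\dim R = 0$ then $R$ is a field, so $T = R$ has maximal ideal $(0)$, matching the qualifier in (2).

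For \emph{sufficiency}, assume $T$ satisfies (1) and (2). If $M = (0)$ then $T$ is a field (its only prime is $(0)$), and we take $R = T$; so from now on assume $M \neq (0)$ and hence $M \notin \Ass(T)$. The goal is to produce a local domain $R \subseteq T$ with $\widehat{R} = T$. I would build $R$ as the union of an ascending chain of countable quasi-local subdomains $R_0 \subseteq R_1 \subseteq \cdots$ of $T$, each with maximal ideal $M \cap R_i$, arranged so that the union $R = \bigcup_i R_i$ satisfies the standard completion criterion: the induced map $R/(M \cap R)^n \to T/M^n$ is surjective for every $n \ge 1$, and $IT \cap R = I$ for every finitely generated ideal $I$ of $R$. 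These two conditions together force $R$ to be Noetherian with $\widehat{R} = T$.

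The base ring $R_0$ is the localization of the prime subring of $T$ at the contraction of $M$, which is a domain by condition (1). The core technical ingredient is an enlargement lemma: given a countable quasi-local subdomain $S \subseteq T$ with maximal ideal $M \cap S$ and a single obligation to discharge---either a coset $u + M^n$ to place in the image of the map $S' \to T/M^n$, or a finitely generated ideal $I$ of $S$ whose contraction $IT \cap S$ must be corrected down to $I$---one adjoins to $S$ a carefully chosen element $t \in T$. Condition (2) is what makes this possible: since $\Ass(T)$ is finite and every $P \in \Ass(T)$ satisfies $P \subsetneq M$, no coset of $M^n$ is contained in $\bigcup_{P \in \Ass(T)} P$, so prime avoidance supplies a $t$ in the prescribed coset and outside every associated prime. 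Such a $t$ is generic enough that $S[t]$, localized at its contraction of $M$, remains a countable quasi-local subdomain of $T$.

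The principal obstacle is the combinatorial bookkeeping. Each enlargement introduces new finitely generated ideals and new elements, producing new cosets to hit and new ideal-contractions to correct, so the iteration must dovetail countably many evolving obligations across countably many stages and ensure that each is eventually discharged. Once this is arranged, the union $R = \bigcup_i R_i$ is a domain (an ascending union of domains), is quasi-local with maximal ideal $M \cap R$, and satisfies both conditions of the completion criterion, yielding a local Noetherian domain $R$ with $\widehat{R} = T$ as required.
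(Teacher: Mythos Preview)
The paper does not prove this statement; Theorem~\ref{thm:lech_char_domain} is simply quoted from Lech's paper \cite{lech} as a known result, with no argument given. So there is no ``paper's own proof'' to compare against.

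That said, your sketch is a faithful outline of Lech's original construction, and in fact the paper's later machinery (Definition~\ref{defn:br_0_subring}, Lemmas~\ref{lem:pb_subring_adjoin}, \ref{lem:pippa_prime_avoidance}, \ref{lem:pb_subring_fgideal_contain}, \ref{lem:construct_precompletion_from_pbsubring}) implements a refined version of exactly the same idea in the countable excellent setting: start from the localized prime subring, repeatedly adjoin elements chosen to avoid associated primes, and close up under the two obligations (hitting cosets of $M^2$ and forcing $IT\cap R=I$). Two small remarks. First, by Heitmann's criterion (Proposition~\ref{prop:complete_machine} in the paper) you only need surjectivity onto $T/M^2$, not onto every $T/M^n$; this simplifies the bookkeeping. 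Second, your phrase ``such a $t$ is generic enough that $S[t]$ remains a subdomain'' is where the real work hides: one needs $t+P$ to be transcendental over $S/(S\cap P)$ for each maximal $P\in\Ass(T)$, which is precisely what the coset prime-avoidance lemma (Lemma~\ref{lem:pippa_prime_avoidance}) and the argument of Lemma~\ref{lem:pb_subring_adjoin} supply. With those ingredients made explicit, your outline is correct.
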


More recently in \cite{SMALL19}, the authors characterize completions of {\em countable} local domains. We use this result to construct a base ring, from which we begin our constructions of countable excellent local domains and countable noncatenary local domains.

\begin{theorem}[\cite{SMALL19}, Corollary 2.15]\label{prop:small19_countable_domain}
Let $(T,M)$ be a complete local ring. Then $T$ is the completion of a countable local domain if and only if
\begin{enumerate}
    \item no integer is a zero divisor of $T$,
    \item unless equal to $(0)$, $M\notin\Ass (T)$, and
    \item $T/M$ is countable.
\end{enumerate}
\end{theorem}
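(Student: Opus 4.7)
The proof splits into necessity and sufficiency. For necessity, suppose $T=\widehat{A}$ for a countable local domain $(A,\mathfrak{m})$. Conditions (1) and (2) follow immediately from Lech's Theorem~\ref{thm:lech_char_domain}, since $A$ is in particular a local domain. For (3), the completion isomorphism $T/M\cong A/\mathfrak{m}$ expresses $T/M$ as a quotient of the countable ring $A$, so $T/M$ is countable.

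For sufficiency, assume (1)--(3). The plan is to build $A$ as a directed union $A=\bigcup_{i\ge 0} R_i$ of countable quasi-local subrings of $T$, each a domain with maximal ideal $R_i\cap M$, following the framework of constructing ``nice'' subrings of $T$ developed by Heitmann and refined in later work on characterizing completions. I would take $R_0$ to be the prime subring of $T$ localized at its intersection with $M$; this is countable, quasi-local, and a domain by condition (1). At every stage I would maintain the control condition that $IT\cap R_i=I$ for every finitely generated ideal $I\subseteq R_i$, as this is what forces the limit ring to be Noetherian with the correct completion.

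The construction interleaves countably many tasks, made enumerable by condition (3): for each $n\ge 1$, ensure that the map $R_j\to T/M^n$ is eventually surjective; for every finitely generated ideal $I$ arising in some $R_i$, ensure that the intersection identity $IT\cap R_j\subseteq I$ is witnessed by a later $R_j$; and so on. Each task is accomplished by adjoining a carefully chosen element $v\in T$ of the form $v=u+m$, where $u$ is a given target and $m$ is a correction term chosen inside a prescribed ideal of $M$ but outside finitely many cosets of associated primes of relevant quotients of $T$. Existence of such $v$ is the crucial technical lemma and is a prime-avoidance argument whose feasibility rests on condition (2): since $M\notin\Ass(T)$, and more generally $M$ avoids the associated primes of the intermediate quotients $T/IT$ that matter, there is always room inside any chosen power of $M$ to dodge the forbidden cosets.

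Setting $A=\bigcup_i R_i$, I would verify that $A$ is a countable quasi-local domain with maximal ideal $\mathfrak{m}_A=A\cap M$, that $A\to T/M^2$ is surjective (so $\mathfrak{m}_A T=M$), and that $IT\cap A=I$ for every finitely generated ideal $I\subseteq A$. A standard criterion for recognizing completions then forces $A$ to be Noetherian with $\widehat{A}\cong T$. The main obstacle throughout is the prime-avoidance step: at each stage, the constraints accumulate, but countability of $T/M$ together with condition (2) keeps the required choices always available, so that the countable recursion can in fact handle every task.
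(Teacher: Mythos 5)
The paper does not prove this statement; it is quoted verbatim from \cite{SMALL19} (Corollary 2.15), so there is no in-paper proof to compare against. Your necessity argument is correct, and your sufficiency outline is the standard construction used in \cite{SMALL19} and mirrored by the machinery this paper itself develops in Section 3 ($BR_0$-subrings, closing up finitely generated ideals, surjectivity onto $T/M^2$, and Proposition~\ref{prop:complete_machine}). Two points deserve correction or elaboration, though. First, the cosets you must dodge at each adjunction step are not finitely many: for each of the finitely many (maximal) associated primes $P$ of $T$, the forbidden set is a full set of representatives of the cosets $t+P$ making the adjoined element algebraic over the countable ring $R_i/(R_i\cap P)$, and this set is countably infinite. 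Ordinary prime/coset avoidance therefore does not suffice; one needs the cardinality-sensitive avoidance lemma (Lemma~\ref{lem:pippa_prime_avoidance}, requiring $\abs{D}<\abs{T}$) together with the fact that $T$ is uncountable, which holds by Lemma~\ref{lem:t_mod_p_uncountable} only when $\dim T\geq 1$. Second, because of that, the case $\dim T=0$ must be split off: there condition (2) forces $M=(0)$, so $T$ is a field, countable by (3), and is itself the desired domain. With those repairs your sketch is a faithful account of the cited proof.
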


Throughout this paper, we often need to show that a ring has a certain completion. In order to do this, we make use of the following proposition, which gives sufficient conditions.

\begin{proposition}[\cite{heitmann}, Proposition 1]\label{prop:complete_machine}
If $(R,R\cap M)$ is a quasi-local subring of a complete local ring $(T,M)$, the map $R\to T/M^2$ is onto, and $IT\cap R=IR$ for every finitely generated ideal $I$ of $R$, then $R$ is Noetherian and the natural homomorphism $\widehat{R}\to T$ is an isomorphism.
\end{proposition}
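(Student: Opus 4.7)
My plan is to prove, in order: (i) $M = \mathfrak{m} T$, where $\mathfrak{m} := R \cap M$, and $\mathfrak{m}$ is finitely generated; (ii) $R$ is Noetherian; (iii) $R/\mathfrak{m}^n \to T/M^n$ is an isomorphism for every $n \geq 1$; and (iv) taking inverse limits then gives $\widehat{R} \xrightarrow{\sim} T$.

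For step (i), I fix generators $x_1, \ldots, x_k$ of the finitely generated ideal $M$ of $T$ and use the surjection $R \to T/M^2$ to lift them to $r_1, \ldots, r_k \in R$ with $r_i - x_i \in M^2$; then each $r_i \in M \cap R = \mathfrak{m}$, and $(r_1, \ldots, r_k)T + M^2 = M$ forces $(r_1, \ldots, r_k)T = M$ by Nakayama's lemma in $T$. Applying the hypothesis to $I = (r_1, \ldots, r_k)R$ then gives $\mathfrak{m} = M \cap R = IT \cap R = I$, showing $\mathfrak{m}$ is finitely generated and $M = \mathfrak{m} T$. For step (ii), given any ideal $J \subseteq R$, the $T$-ideal $JT = \sum_{j \in J} jT$ is finitely generated, and one can choose generators from $J$ itself, say $a_1, \ldots, a_n \in J$. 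Setting $I := (a_1, \ldots, a_n)R \subseteq J$, we have $IT = JT$, so the hypothesis forces $J \subseteq JT \cap R = IT \cap R = I$; hence $J = I$ is finitely generated.

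For step (iii), with $R$ now Noetherian, $\mathfrak{m}^n$ is finitely generated and $\mathfrak{m}^n T = (\mathfrak{m}T)^n = M^n$, so the hypothesis yields $R \cap M^n = \mathfrak{m}^n$; this gives injectivity. Surjectivity, i.e.\ $R + M^n = T$, is proved by induction on $n$: the case $n = 2$ is given, and for $n \geq 3$, I use the inductive hypothesis to write $t \in T$ as $t = r + s$ with $r \in R$ and $s \in M^{n-1} = \mathfrak{m}^{n-1} T$. Expanding $s = \sum a_i t_i$ with $a_i \in \mathfrak{m}^{n-1}$ and $t_i \in T$, lifting each $t_i$ to $r_i \in R$ modulo $M = \mathfrak{m}T$, and grouping, one sees that $s - \sum a_i r_i \in \mathfrak{m}^{n-1} \cdot M = M^n$, placing $t$ in $R + M^n$. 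The isomorphisms $R/\mathfrak{m}^n \cong T/M^n$ then assemble in step (iv) to $\widehat{R} = \varprojlim R/\mathfrak{m}^n \cong \varprojlim T/M^n = T$ because $T$ is $M$-adically complete.

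The main obstacle is the inductive step in (iii): it requires simultaneous use of the identity $M = \mathfrak{m} T$, the ``base-case'' lifting supplied by $R + M^2 = T$, and the finite generation of $\mathfrak{m}^{n-1}$. This explains why (i) and (ii) must be established first, so that the hypothesis $IT \cap R = IR$ can be legitimately applied to the ideals $\mathfrak{m}^n$. Everything else is a straightforward combination of Nakayama, the contraction hypothesis, and completeness of $T$.
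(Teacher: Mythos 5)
Your proof is correct. The paper does not prove this proposition but cites it from Heitmann, and your argument (extract a finite generating set of $R\cap M$ from the surjection onto $T/M^2$ plus Nakayama, deduce Noetherianity from the contraction hypothesis, and then show $R/(R\cap M)^n\cong T/M^n$ and pass to inverse limits) is essentially the standard proof of that cited result.
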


Note that if $R$ is a local ring and $\widehat{R}=T$, then $T$ is a faithfully flat extension of $R$. It follows that $R$ and $T$ satisfy the going-down theorem, implying that if $P\in\Spec(T)$, then $\height(P\cap R)\leq\height(P)$. Furthermore, the fact that $T$ is a faithfully flat extension of $R$ implies that for any finitely generated ideal $I$ of $R$, we have that $IT\cap R=IR$. This allows us to show that the converse of Proposition~\ref{prop:complete_machine} also holds.

\begin{proposition}\label{prop:completion_machine_converse}
If $(T,M)$ is a complete local ring and $(R,R\cap M)$ is a local subring of $T$ such that $\widehat{R}=T$, then the map $R\to T/M^2$ is onto, and $IT\cap R=IR$ for every finitely generated ideal $I$ of $R$.
\end{proposition}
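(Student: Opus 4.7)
The plan is to exploit the fact that the hypothesis $\widehat{R}=T$ already tells us everything we need, via two standard facts about completions: (i) the natural isomorphism $\widehat{R}/m^n\widehat{R}\cong R/m^n$ for all $n\ge 1$, where $m=R\cap M$ is the maximal ideal of $R$; and (ii) the faithful flatness of $\widehat{R}$ over $R$. Both are completely standard for a Noetherian local ring, and the paragraph immediately preceding the proposition already highlights (ii). So the proof should be short and essentially consist of citing these facts and identifying the relevant ideals.

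First, I would set up notation: write $m=R\cap M$ for the maximal ideal of $R$, and note that under the identification $\widehat{R}=T$, the maximal ideal of $\widehat{R}$ is $m\widehat{R}=mT=M$, so in particular $M^n=m^nT$ for all $n$. Next, for the surjectivity claim, I would invoke the standard completion isomorphism $\widehat{R}/m^n\widehat{R}\cong R/m^n$ with $n=2$, giving $T/M^2\cong R/m^2$, under which the inclusion-composed-with-quotient map $R\hookrightarrow T\to T/M^2$ becomes the canonical surjection $R\to R/m^2$. This immediately shows $R\to T/M^2$ is onto.

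For the intersection claim, I would note that since $T=\widehat{R}$ and $R$ is Noetherian local, $T$ is faithfully flat over $R$. A standard consequence of faithful flatness (e.g.\ Matsumura) is that $IT\cap R=I$ for every ideal $I$ of $R$, not just finitely generated ones; in particular, this gives $IT\cap R=IR$ for every finitely generated $I$, as required.

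I do not anticipate any real obstacle here: the proposition is essentially a formal unwinding of what $\widehat{R}=T$ means, combined with two textbook properties of the $m$-adic completion of a Noetherian local ring. The only mild bookkeeping is verifying the identification $M=mT$ so that $T/M^2$ really matches $R/m^2$, but this is immediate from the fact that the maximal ideal of $\widehat{R}$ is $m\widehat{R}$.
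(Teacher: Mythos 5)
Your proposal is correct and follows essentially the same route as the paper: the faithful-flatness argument for $IT\cap R=IR$ is identical, and your appeal to the isomorphism $\widehat{R}/m^2\widehat{R}\cong R/m^2$ is just the packaged form of the paper's explicit argument, which writes $t\in T$ as a convergent series $r_0+r_1+r_2+\cdots$ with $r_i\in(R\cap M)^i$ and truncates modulo $M^2$. No gaps.
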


\begin{proof}
We first show that since $\widehat{R}=T$, the map $R\to T/M^2$ is onto. 
Let $t + M^2 \in T/M^2$.  Then, since $\widehat{R}=T$, we have that $t = r_0 + r_1 + r_2 + \cdots$ where $r_i \in (R \cap M)^i$ for all $i \geq 1$.  Therefore, $t + M^2 = r_0 + r_1 + M^2$, and we have that  $r_0 + r_1 \in R$ maps to $t + M^2$ under the natural map from $R$ to $T/M^2$.

Since $\widehat{R}=T$, the ring $T$ is a faithfully flat extension of $R$, implying that $IT\cap R=IR$ for any finitely generated ideal $I$ of $R$.
\end{proof}

We then have the following corollary, which provides another way to show that a ring has a certain completion.

\begin{corollary}\label{cor:basering_complete_macine}
Suppose $(T,M)$ is a complete local ring, and $(R,R\cap M)$ is a local subring of $T$ such that $\widehat{R}=T$. Let $(A,A\cap M)$ be a quasi-local subring of $T$ such that $R\subseteq A$, and such that, for every finitely generated ideal $I$ of $A$, $IT\cap A=IA$. Then, $A$ is Noetherian and $\widehat{A}=T$.
\end{corollary}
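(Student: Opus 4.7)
The plan is to reduce this corollary to a direct application of Proposition~\ref{prop:complete_machine}. To invoke that proposition for the subring $A \subseteq T$, I need to verify three hypotheses: that $(A, A\cap M)$ is quasi-local in $(T,M)$ (given), that $IT \cap A = IA$ for every finitely generated ideal $I$ of $A$ (given), and that the natural map $A \to T/M^2$ is surjective. The only nontrivial task is establishing the surjectivity condition.

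For this, I would use Proposition~\ref{prop:completion_machine_converse} applied to the subring $R \subseteq T$. Since $\widehat{R} = T$ by hypothesis, that proposition yields that the natural map $R \to T/M^2$ is surjective. Since $R \subseteq A \subseteq T$, the map $R \to T/M^2$ factors as $R \hookrightarrow A \to T/M^2$, so the surjectivity of the composition forces surjectivity of $A \to T/M^2$.

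With all three hypotheses of Proposition~\ref{prop:complete_machine} verified, the proposition gives that $A$ is Noetherian and the natural map $\widehat{A} \to T$ is an isomorphism, which is exactly the conclusion. I do not anticipate any obstacles here, since the corollary is essentially a bookkeeping consequence of the two preceding propositions: the role of $R$ in the hypothesis is solely to supply, via its known completion, the image-in-$T/M^2$ condition that we would otherwise have to verify for $A$ directly.
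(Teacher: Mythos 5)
Your proposal is correct and matches the paper's own proof essentially verbatim: both apply Proposition~\ref{prop:completion_machine_converse} to $R$ to get surjectivity of $R\to T/M^2$, deduce surjectivity of $A\to T/M^2$ from $R\subseteq A$, and then invoke Proposition~\ref{prop:complete_machine}. No issues.
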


\begin{proof}
By Proposition~\ref{prop:completion_machine_converse}, we have that the map $R\to T/M^2$ is onto. It follows that the map $A\to T/M^2$, is onto. By assumption, for every finitely generated ideal $I$ of $A$, $IT\cap A=IA$. Thus, by Proposition~\ref{prop:complete_machine}, $A$ is Noetherian with completion $T$.
\end{proof}

We now provide some background on excellent rings, starting with a few definitions. Recall that a ring $A$ is \textit{catenary} if, for any pair of prime ideals $P\subsetneq Q$ of $A$, all saturated chains of prime ideals between $P$ and $Q$ have the same length. If a ring is not catenary, then it is called \textit{noncatenary}. A Noetherian ring $A$ is \textit{universally catenary} if $A[x_1,\ldots,x_n]$ is catenary for every $n\geq 0$; although this is not the classical definition, it is equivalent to the classical definition (see \cite{matsumura}, pg. 118). For any $P\in\Spec(A)$, define $k(P)\coloneqq A_P/PA_P$.

\begin{definition}[\cite{rotthaus}, Definition 1.4]\label{defn:excellent}
A local ring $A$ is {\em excellent} if
\begin{itemize}
    \item[(a)] for all $P\in\Spec(A)$, $\widehat{A}\otimes_A L$ is regular for every finite field extension $L$ of $k(P)$, and
    \item[(b)] $A$ is universally catenary.
\end{itemize}
\end{definition}

A local ring is \textit{formally equidimensional} if its completion is equidimensional. A consequence of Theorem 31.6 from \cite{matsumura} is that a formally equidimensional Noetherian local ring is universally catenary. Thus, we have the following result from \cite{loepp2018uncountable}.

\begin{theorem}[\cite{loepp2018uncountable}, Theorem 2.4]\label{thm:equi_implies_univcat}
Let $A$ be a local ring such that its completion, $\widehat{A}$, is equidimensional. Then $A$ is a universally catenary.
\end{theorem}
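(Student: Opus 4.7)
The plan is to observe that the hypothesis of the theorem---that $\widehat{A}$ is equidimensional---is, by definition, the statement that $A$ is formally equidimensional, exactly as given in the paragraph immediately preceding the theorem. Having noted this, I would appeal directly to the consequence of Matsumura's Theorem 31.6 already quoted there: a formally equidimensional Noetherian local ring is universally catenary. The proof is then essentially a one-line restatement of that cited fact together with the definitional identification.

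To give a bit more context for the citation, Matsumura's Theorem 31.6 (originally due to Ratliff) asserts the equivalence of several conditions on a Noetherian local ring $A$, among them: (i) $\widehat{A}$ is equidimensional; (ii) $A$ is universally catenary and equidimensional; and (iii) $A$ is quasi-unmixed. The implication $(\mathrm{i}) \Rightarrow (\mathrm{ii})$ is exactly the content of the theorem we wish to prove, since (ii) in particular yields universal catenarity. If one wished to spell out what is hidden inside this reduction, the substantive step would be to verify that equidimensionality of $\widehat{A}$ propagates to $\widehat{A/\mathfrak{p}}$ for every $\mathfrak{p}\in\Spec(A)$: this is a short formal-fiber argument using that $\widehat{A}$ is catenary (being complete local Noetherian), that faithful flatness of $A\to\widehat{A}$ gives $\height(\mathfrak{q}) = \height(\mathfrak{p})$ for every minimal prime $\mathfrak{q}$ of $\mathfrak{p}\widehat{A}$, and that catenarity plus equidimensionality of $\widehat{A}$ then force $\dim(\widehat{A}/\mathfrak{q}) = \dim(\widehat{A}) - \height(\mathfrak{p})$ independently of the choice of $\mathfrak{q}$.

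The main (and really only) obstacle is therefore the proof of Matsumura's Theorem 31.6 itself, which I would not reproduce: it is well-documented in the standard references. With that reference in hand, the theorem to be proved reduces to the definitional observation in the first paragraph, and no independent argument beyond the citation is required.
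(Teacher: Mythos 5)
Your proposal is correct and matches the paper exactly: the paper gives no independent argument for this theorem, but justifies it in the preceding paragraph by precisely the same two observations you make, namely that the hypothesis is the definition of $A$ being formally equidimensional and that Theorem 31.6 of Matsumura then yields universal catenarity. The extra context you provide on the content of Ratliff's theorem is accurate but not needed beyond the citation.
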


It is noted in \cite{rotthaus} that, for Definition \ref{defn:excellent}, it is enough to only consider the purely inseparable finite field extensions  $L$ of $k(P)$. Because of this, we have the following modification of \cite[Lemma 2.5]{loepp2018uncountable} that gives sufficient criteria for a subring of a complete local ring satisfying certain conditions to be excellent. The proof is almost verbatim from the proof given in \cite{loepp2018uncountable}.

\begin{lemma}\label{lem:excellent_sufficient_criteria}
Let $(T,M)$ be a complete local ring that is equidimensional and suppose $\Q\subseteq T$. Given a subring $(A,A\cap M)$ of $T$ with $\widehat{A}=T$, $A$ is excellent if, for every $P\in\Spec(A)$ and for every $Q\in\Spec(T)$ with $Q\cap A=P$, $(T/PT)_Q$ is a regular local ring.
\end{lemma}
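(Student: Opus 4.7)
The plan is to verify the two defining conditions of excellence for $A$: condition (b), universal catenarity, and condition (a), regularity of $\widehat{A}\otimes_A L$ for every $P\in\Spec(A)$ and every finite field extension $L$ of $k(P)$. Condition (b) is essentially immediate: since $T$ is equidimensional and $\widehat{A}=T$, Theorem~\ref{thm:equi_implies_univcat} gives that $A$ is universally catenary.

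For condition (a), I would first exploit the characteristic-zero hypothesis. Since $\mathbb{Q}\subseteq T$ and $A\subseteq T$, every residue field $k(P)$ has characteristic zero, so every finite purely inseparable extension of $k(P)$ is trivial. By the reduction noted in the paragraph preceding the lemma, it therefore suffices to show that $\widehat{A}\otimes_A k(P)$ is a regular ring for every $P\in\Spec(A)$.

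To prove this, the key observation is the identification $\widehat{A}\otimes_A k(P)=T\otimes_A(A_P/PA_P)=S^{-1}(T/PT)$, where $S$ denotes the image of $A\setminus P$ in $T/PT$. The prime ideals of this localization correspond bijectively to the primes $Q\in\Spec(T)$ satisfying $Q\cap A=P$ (note that $Q\cap A=P$ automatically forces $PT\subseteq Q$), and the localization at such a prime is precisely $(T/PT)_Q$. By hypothesis, every such $(T/PT)_Q$ is a regular local ring. Using the standard criterion that a Noetherian ring is regular if and only if all of its localizations at maximal ideals are regular local rings, we conclude that $\widehat{A}\otimes_A k(P)$ is regular.

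The step I expect to require the most care is pinning down the prime-ideal correspondence for $S^{-1}(T/PT)$ and verifying that its localizations at maximal ideals really do match the rings $(T/PT)_Q$ appearing in the hypothesis; one must also be a bit careful that $\widehat{A}\otimes_A k(P)$ is Noetherian so that the regularity criterion applies (this follows from its realization as a localization of $T/PT$, since $T$ is Noetherian). Once this identification is made, combining it with the characteristic-zero reduction and Theorem~\ref{thm:equi_implies_univcat} completes the proof.
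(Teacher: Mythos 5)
Your proposal is correct and follows essentially the same route as the paper's proof: universal catenarity from Theorem~\ref{thm:equi_implies_univcat}, the characteristic-zero reduction to the trivial (purely inseparable) extension, and the identification of the localizations of the fiber ring $T\otimes_A k(P)$ at its primes with the rings $(T/PT)_Q$ for $Q\cap A=P$. The only difference is that you spell out the prime-ideal correspondence for $S^{-1}(T/PT)$ in more detail than the paper does, which is a harmless (and careful) elaboration rather than a different argument.
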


\begin{proof}
We know that $A$ is a local ring, so we must show now that both conditions of Definition~\ref{defn:excellent} hold. By Theorem~\ref{thm:equi_implies_univcat}, we have that $A$ is universally catenary. Thus, it remains to consider $T\otimes_A L$ for every purely inseparable finite field extension $L$ of $k(P)$ for each $P\in\Spec(A)$. Since $\Z\subseteq A$ and all nonzero integers are units, we have that $\Q\subseteq k(P)$, so $k(P)$ has characteristic $0$. Every finite field extension with characteristic $0$ is separable. Since it is sufficient to check only purely inseparable field extensions, this leaves only the trivial field extension, as this is the only field extension that is both separable and purely inseparable. Thus, we need only show that $T\otimes_A k(P)$ is regular for every $P\in\Spec(A)$. Note that, for $Q\in\Spec(T)$ with $Q\cap A=P$, the ring $T\otimes_A k(P)$ localized at $Q\otimes k(P)$ is isomorphic to $(T/PT)_Q$. Thus, it suffices to show that $(T/PT)_Q$ is a regular local ring. 
\end{proof}

The following result from \cite{rotthaus} concerns the structure of $\Sing(R)$ for excellent rings.

\begin{lemma}[\cite{rotthaus}, Corollary 1.6]\label{lem:sing_closed}
If $R$ is excellent, then $\Sing(R)$ is closed in the Zariski topology, i.e., $\Sing(R)=V(I)$ for some ideal $I$ of $R$.
\end{lemma}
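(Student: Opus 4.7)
The plan is to show the complementary statement: $\mathrm{Reg}(R) \coloneqq \Spec(R)\setminus\Sing(R)$ is open in the Zariski topology. Once this is done, writing $\mathrm{Reg}(R) = \Spec(R) \setminus V(I)$ for some ideal $I$ gives $\Sing(R) = V(I)$, which is the desired conclusion. The main tool I would use is the formal fiber condition (a) of Definition~\ref{defn:excellent}, combined with the standard fact that excellent rings are $J$-$2$ (i.e., every finitely generated $R$-algebra has open regular locus).

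First, I would use condition (a) to transfer regularity between $R_P$ and its completion. Specifically, for any $P\in\Spec(R)$, the natural map $R_P\to\widehat{R_P}$ is flat with geometrically regular fibers (this is exactly the formal fiber hypothesis applied at $P$). By the standard descent/ascent of regularity along such faithfully flat regular morphisms, $R_P$ is regular if and only if $\widehat{R_P}$ is regular. This reduces the question of which primes lie in $\Sing(R)$ to a question about complete local rings, which are easier to control via Cohen's structure theorem.

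Next, I would invoke (or develop) the $J$-$2$ property for excellent rings. The key input is a Jacobian-type criterion: choose an affine presentation locally, and use the fact that the locus where a Jacobian matrix has the expected rank is open. The formal fiber hypothesis, together with universal catenarity (condition (b)), allows one to patch this local data and conclude that $\mathrm{Reg}(R)$ is open. Concretely, one shows that if $P\in\mathrm{Reg}(R)$, then there is some $f\notin P$ such that $D(f)\subseteq\mathrm{Reg}(R)$.

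The main obstacle is precisely this last openness step. A from-scratch proof requires Nagata's theorem that a Noetherian ring with geometrically regular formal fibers at every prime is a $G$-ring, and that $G$-rings which are universally catenary satisfy $J$-$2$. This is the nontrivial content hidden in the citation to Rotthaus; it is not formal and requires the Jacobian criterion plus careful handling of residue field extensions (which is much easier in our characteristic-zero setting, since every finite extension is separable, as observed in the proof of Lemma~\ref{lem:excellent_sufficient_criteria}). Granting this machinery, the lemma itself follows immediately by taking $I$ to be the radical ideal defining the closed complement of $\mathrm{Reg}(R)$.
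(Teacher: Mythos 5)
The paper does not prove this lemma at all --- it is quoted directly from Rotthaus's Corollary 1.6 --- so there is no internal argument to compare yours against; the only question is whether your sketch is a sound reconstruction. In outline it is the standard one (pass to the complement and show the regular locus is open via the theory of rings with geometrically regular formal fibres), and you are candid that the essential content is being deferred to that theory, which is exactly what the paper does by citing Rotthaus. Two of your intermediate assertions, however, are off. First, universal catenarity is irrelevant: the correct statement is not that ``$G$-rings which are universally catenary satisfy $J$-$2$,'' but that a \emph{semilocal} (in particular local, which is the only case the paper ever uses, namely $R = T/PT$) Noetherian ring with geometrically regular formal fibres is quasi-excellent and hence has open regular locus; for general Noetherian rings the $J$-$2$ condition is imposed as a separate axiom precisely because it is not a formal consequence of the fibre condition. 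Second, your opening reduction is vacuous: for \emph{any} Noetherian local ring one has that $R_P$ is regular if and only if $\widehat{R_P}$ is regular, since regularity always ascends to the completion and always descends along faithfully flat maps; this step therefore uses none of condition (a) of Definition~\ref{defn:excellent} and, being purely pointwise, contributes nothing toward openness. Similarly, ``choose an affine presentation and apply a Jacobian criterion'' is not literally available for an abstract Noetherian local ring. The actual proof proceeds via Nagata's topological openness criterion: one checks that the regular locus is stable under generization and that, for each prime $P$ it contains, it contains a nonempty open subset of $V(P)$; it is in this second step that the formal fibre hypothesis enters, by comparing the singular locus of $R/P$ with that of the complete local ring $\widehat{R/P}$, whose singular locus is closed. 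Black-boxing the citation, as the paper does, is perfectly acceptable; but the portions of your sketch that are not black-boxed should be repaired as above.
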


We end this section with two results on the cardinalities of local rings and their quotient rings. 

\begin{proposition}\label{prop:t/m^2_countable}
Let $(T,M)$ be a local ring. If $T/M$ is finite, then $T/M^2$ is finite. If $T/M$ is infinite, then $\abs{T/M^2}=\abs{T/M}$.
\end{proposition}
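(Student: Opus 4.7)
The plan is to reduce the whole question to counting a finite-dimensional vector space over the residue field $k := T/M$, using the short exact sequence of abelian groups
\[
0 \longrightarrow M/M^2 \longrightarrow T/M^2 \longrightarrow T/M \longrightarrow 0.
\]
From this sequence we get $\abs{T/M^2} = \abs{M/M^2}\cdot\abs{T/M}$, so it suffices to bound $\abs{M/M^2}$.

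First I would observe that since $(T,M)$ is local (hence Noetherian in this paper's convention), the ideal $M$ is finitely generated as a $T$-module. Consequently $M/M^2$ is a finitely generated module over $T/M^2$ annihilated by the image of $M$, so it is a finitely generated module over $k = T/M$, i.e.\ a finite-dimensional $k$-vector space. Let $n := \dim_k(M/M^2)$, so that $\abs{M/M^2} = \abs{k}^n$.

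Now I would split into the two cases. If $T/M$ is finite, then $\abs{k}^n$ is finite, and hence $\abs{T/M^2} = \abs{k}^n \cdot \abs{k}$ is finite as well. If $T/M$ is infinite, then the standard cardinal arithmetic fact $\abs{k^n}=\abs{k}$ for any infinite set $k$ (and any $n\ge 1$; the case $n=0$ gives $\abs{M/M^2}=1\le\abs{k}$) yields $\abs{M/M^2}\le\abs{k}$, and then
\[
\abs{T/M^2}\;=\;\abs{M/M^2}\cdot\abs{T/M}\;\le\;\abs{k}\cdot\abs{k}\;=\;\abs{k}.
\]
The reverse inequality $\abs{T/M^2}\ge\abs{T/M}$ is immediate from the surjection $T/M^2\twoheadrightarrow T/M$, giving $\abs{T/M^2}=\abs{T/M}$.

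There is essentially no obstacle here; the only mild point is to remember that Noetherianness is needed so that $M/M^2$ is finite-dimensional over $k$, and to invoke the standard fact about cardinalities of finite products of infinite sets in the second case.
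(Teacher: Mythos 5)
Your argument is correct and complete. The paper itself does not give a direct proof here; it simply cites Lemma 2.12 of the reference \cite{SMALL19}, so your write-up is, in effect, supplying the standard self-contained argument that the citation hides. Your route --- the exact sequence $0 \to M/M^2 \to T/M^2 \to T/M \to 0$, the observation that Noetherianness makes $M/M^2$ a finite-dimensional vector space over $k = T/M$, and the cardinal arithmetic $\abs{k^n} = \abs{k}$ for infinite $k$ --- is exactly the natural proof, and you correctly handle the edge case $n = 0$ and obtain the reverse inequality from the surjection $T/M^2 \twoheadrightarrow T/M$. The only difference between your approach and the paper's is that the paper outsources the work to an external lemma while you do it directly; for a reader, your version has the advantage of being verifiable on the spot, at the cost of a few extra lines.
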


\begin{proof} The result follows from Lemma 2.12 in \cite{SMALL19}.
\end{proof}

Let $c$ denote the cardinality of $\R$.

\begin{lemma}[\cite{dundon}, Lemma 2.2]\label{lem:t_mod_p_uncountable}
Let $(T,M)$ be a complete local ring with $\dim T\geq 1$. Let $P$ be a nonmaximal prime ideal of $T$. Then, $\abs{T/P}=\abs{T}\geq c$.
\end{lemma}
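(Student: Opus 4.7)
The plan is to establish $|T|=|T/P|=|T/M|^{\aleph_0}$ by sandwiching
\[
|T/P|\;\geq\;|T/M|^{\aleph_0}\;\geq\;|T|\;\geq\;|T/P|,
\]
where the last inequality is the obvious surjection $T\twoheadrightarrow T/P$; the bound $|T|\geq c$ will then follow from the fact that $|T/M|\geq 2$. Since $P$ is nonmaximal and $\dim T\geq 1$, the quotient $T/P$ is a complete local domain with nonzero maximal ideal $M/P$, so I may fix some $\bar{x}\in M/P$ with $\bar{x}\neq 0$ to drive the key construction below.

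For the upper bound $|T|\leq|T/M|^{\aleph_0}$, I use that $T$ is Noetherian, so each $M^i/M^{i+1}$ is a finite-dimensional $T/M$-vector space and in particular has cardinality at most $\max(\aleph_0,|T/M|)$. By induction on $n$ the same bound holds for $|T/M^n|$ for every $n\geq 1$. Completeness gives an embedding $T\hookrightarrow\prod_{n\geq 1}T/M^n$, whence $|T|\leq|T/M|^{\aleph_0}$; in the case that $T/M$ is finite, this reads $|T|\leq\aleph_0^{\aleph_0}=2^{\aleph_0}=|T/M|^{\aleph_0}$.

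For the lower bound $|T/P|\geq|T/M|^{\aleph_0}$, I choose a set $S\subseteq T/P$ of coset representatives modulo $M/P$, so that $|S|=|(T/P)/(M/P)|=|T/M|$, and define
\[
\Phi\colon S^{\mathbb{N}}\to T/P,\qquad (s_i)_{i\geq 0}\mapsto\sum_{i=0}^{\infty}s_i\bar{x}^i,
\]
which is well-defined by completeness of $T/P$. To show $\Phi$ is injective, suppose $\Phi((s_i))=\Phi((s'_i))$, let $k$ be the least index where $s_k\neq s'_k$, and factor $\bar{x}^k$ out of $\sum_{i\geq k}(s_i-s'_i)\bar{x}^i=0$. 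The remaining factor has the form $(s_k-s'_k)+(\text{element of }M/P)$; since $s_k-s'_k\notin M/P$ it is a unit of the local ring $T/P$, and therefore so is the whole factor. This forces $\bar{x}^k=0$, contradicting that $\bar{x}$ is a nonzero element of the domain $T/P$.

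Combining these inequalities yields $|T|=|T/P|=|T/M|^{\aleph_0}$, and since $T/M$ is a field we have $|T/M|\geq 2$, so $|T/M|^{\aleph_0}\geq 2^{\aleph_0}=c$. The main obstacle is the injectivity of $\Phi$, where the domain property of $T/P$ and the nonvanishing of $\bar{x}$ are essential; the rest is routine cardinal arithmetic using Noetherianity and completeness of $T$.
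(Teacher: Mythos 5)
Your proof is correct. The paper does not prove this lemma --- it is quoted directly from \cite{dundon} --- but your argument (bounding $|T|$ above by $|T/M|^{\aleph_0}$ via the filtration $\{M^n\}$, and bounding $|T/P|$ below by $|T/M|^{\aleph_0}$ via injectivity of the power-series map $(s_i)\mapsto\sum s_i\bar{x}^i$ into the complete local domain $T/P$) is essentially the standard one used in the cited source, and all the key points (convergence, the unit factor forcing $\bar{x}^k=0$, and the cardinal arithmetic including the finite-residue-field case) are handled correctly.
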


\section{Completions of Countable Excellent Local
Domains}\label{sec:countable_excellent}

In this section, we give necessary and sufficient conditions for a complete local ring containing $\Q$ to be the completion of a countable excellent local domain. 

We first cite the following result from \cite{loepp03}, which gives necessary and sufficient conditions on a complete local ring containing the integers to be the completion of an excellent local domain.

\begin{theorem}[\cite{loepp03}\label{excellentdomain}, Theorem 9]\label{thm:loepp_excellent_domain_char}
Let $(T,M)$ be a complete local ring containing the integers. Then $T$ is the completion of a local excellent domain if and only if it is reduced, equidimensional, and no integer of $T$ is a zero divisor.
\end{theorem}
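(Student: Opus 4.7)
The plan is to prove both directions. For necessity, suppose $A$ is an excellent local domain with $\widehat{A}=T$. Since $A$ is a domain, Lech's theorem (Theorem~\ref{thm:lech_char_domain}) immediately gives that no integer of $T$ is a zero divisor. Excellence requires that the formal fibers $\widehat{A}\otimes_A k(P)$ be geometrically regular for every $P\in\Spec(A)$; applied to $P=(0)$, this forces $\widehat{A}_Q$ to be regular for every $Q\in\Spec(\widehat{A})$ lying over $(0)$, and in particular for every minimal prime of $\widehat{A}$, which upgrades by standard arguments to $\widehat{A}$ being reduced. Finally, excellence makes $A$ universally catenary, and a universally catenary local domain has equidimensional completion, yielding condition (2).

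For sufficiency, suppose $T$ satisfies the three conditions and construct, via transfinite induction, an excellent local domain $A\subseteq T$ with $\widehat{A}=T$. I would begin with a convenient base subring, for example the localization of the prime subring of $T$ at its intersection with $M$, and at each stage adjoin a carefully chosen element of $T$ to accomplish one of three families of tasks: first, produce preimages so that in the limit $A\to T/M^2$ is surjective; second, for each finitely generated ideal $I$ of the evolving ring and each $t\in IT$, produce a witness so that $IT\cap A=IA$ in the limit; and third, for each prime $P$ that will appear in the limit ring and each $Q\in\Spec(T)$ with $Q\cap A=P$, arrange that $(T/PT)_Q$ is a regular local ring. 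The first two families together with Proposition~\ref{prop:complete_machine} will guarantee $\widehat{A}=T$; the third family, combined with universal catenarity (which follows from equidimensionality of $T$ via Theorem~\ref{thm:equi_implies_univcat}), is aimed at ensuring excellence through an analogue of Lemma~\ref{lem:excellent_sufficient_criteria}. At every stage the adjoined element must be chosen to avoid every minimal prime of $T$, so that $A$ remains a domain; this is possible by a prime-avoidance argument, using that $T$ is reduced and equidimensional so that its associated primes are exactly its finitely many minimal primes.

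The main obstacle I expect is that $T$ is assumed to contain only $\Z$, not $\Q$. Consequently, Lemma~\ref{lem:excellent_sufficient_criteria} does not apply verbatim, and one cannot reduce excellence to regularity of $(T/PT)_Q$ alone: for primes $P\in\Spec(A)$ with $k(P)$ of positive characteristic one must also verify regularity of $L\otimes_{k(P)}(T/PT)_Q$ for every finite purely inseparable extension $L/k(P)$. This forces the construction to adjoin further \emph{purely inseparable witness} elements at every stage of the induction, and coordinating these with the three families above, while maintaining the domain property and not disturbing the finitely generated ideal condition $IT\cap A=IA$, is the delicate technical core of the proof. Cardinality bookkeeping across the transfinite stages must also be tracked to ensure the construction terminates with a ring $A$ of the appropriate size, though here there is no upper bound imposed.
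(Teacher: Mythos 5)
This statement is quoted from \cite{loepp03} and the paper supplies no proof of it, so the natural comparison is with the paper's proof of the countable analogue, Theorem~\ref{thm:countable_excellent_dimgeq1_char}, and with the strategy of \cite{loepp03} that the paper summarizes. Your necessity direction is essentially right and matches the paper's argument for the countable version: no integer is a zero divisor by Theorem~\ref{thm:lech_char_domain}; equidimensionality follows from Ratliff's theorem (\cite{matsumura}, Theorem 31.7) applied to the domain $A$; and reducedness of $T$ is most cleanly obtained from the fact that the completion of a reduced excellent local ring is reduced (\cite{matsumura}, Theorem 32.2) --- your route through regularity of the generic formal fiber directly gives only Serre's $(R_0)$ at the minimal primes of $T$, and you still need the flatness fact that every associated prime of $T$ contracts to $(0)$ in $A$ to get $(S_1)$; this can be repaired, but as written it is compressed past the point of being a proof.

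The sufficiency direction is only a plan, and it has two genuine gaps. First, your criterion for keeping $A$ a domain --- choose each adjoined element to ``avoid every minimal prime of $T$'' --- is too weak: if $x\notin P$ but $x+P$ is algebraic over $R/(R\cap P)$, then $R[x]$ can still meet $P$ nontrivially. What is actually required (compare Lemma~\ref{lem:pb_subring_adjoin}) is that $x+P$ be \emph{transcendental} over $R/(R\cap P)$ for each maximal element $P$ of $\Ass(T)$, and arranging this is exactly why the coset-avoidance Lemma~\ref{lem:pippa_prime_avoidance}, not ordinary prime avoidance, is needed. Second, and decisively, you correctly observe that since $T$ is only assumed to contain $\Z$, geometric regularity of the formal fibers must be checked against nontrivial purely inseparable extensions $L/k(P)$ when $k(P)$ has positive characteristic, and that Lemma~\ref{lem:excellent_sufficient_criteria} therefore does not apply; but you then defer this entirely, calling it ``the delicate technical core.'' That core is the actual content of Theorem 9 of \cite{loepp03}; without a concrete mechanism for producing and coordinating the inseparability witnesses (and showing they are compatible with the domain condition and with $IT\cap A=IA$), the sufficiency direction has not been proved, only described.
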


To prove that the conditions in Theorem \ref{excellentdomain} are sufficient, the author of \cite{loepp03} assumes that the conditions hold on a complete local ring $T$ and then constructs a subring $A$ of $T$ such that $A$ is an excellent domain with $\widehat{A} = T$.  If the dimension of $T$ is at least two, the ring $A$ for that construction has uncountably many prime ideals, and so $A$ is uncountable.  As we want to construct a {\em countable} excellent domain, the construction in \cite{loepp03} does not work.  The proof of our main result in this section, then, is fundamentally different than the proof of Theorem 9 in \cite{loepp03}.

The conditions in Theorem \ref{excellentdomain}, along with conditions (1), (2), and (3) of Theorem~\ref{prop:small19_countable_domain}, are, of course, necessary for a complete local ring containing the rationals to be the completion of a countable excellent local domain. Note that in the case that $\dim T\geq 1$, $T$ being reduced is a stronger condition than $M\notin\Ass(T)$, since if $T$ is a Noetherian reduced ring, then $\Min(T)=\Ass(T)$. In addition, if a complete local ring contains the rationals, then every nonzero integer is a unit, and thus not a zero divisor. It is therefore enough to consider the conditions of being equidimensional, reduced, and having a countable residue field. The bulk of this section is dedicated to showing that these conditions are also sufficient in the case that $\dim T\geq 1$.

We accomplish this by showing that if a complete local ring satisfies the aforementioned conditions, then one can construct a countable excellent local domain whose completion is the initial complete local ring. In order to construct the ring, we adjoin carefully-chosen elements to a subring while ensuring that important properties are preserved. We introduce a definition that is modified from \cite[Definition 2.2]{SMALL19} which encapsulates these desired properties that need to be preserved.

\begin{definition}\label{defn:br_0_subring}
Let $(T,M)$ be a complete local ring and let $(R_0,R_0\cap M)$ be a countable local subring of $T$ such that $R_0$ is a domain and $\widehat{R}_0=T$. Let $(R,R\cap M)$ be a quasi-local subring of $T$ with $R_0\subseteq R$. Suppose that
\begin{itemize}
    \item[(a)] $R$ is countable, and
    \item[(b)] $R\cap P=(0)$ for every $P\in\Ass(T)$.
\end{itemize}
Then we call $R$ a
\textit{built-from-}$R_0$ subring of $T$, or a $BR_0$-subring of $T$ for short.
\end{definition}

In other words, $BR_0$-subrings of $T$ are countable quasi-local rings $(R,R\cap M)$ such that $R_0\subseteq R\subseteq T$ and $R$ contains no zero divisors of $T$. Note that the countable union of an ascending chain of $BR_0$-subrings of $T$ is also a $BR_0$-subring of $T$.

The general outline of our construction of a countable excellent local domain is as follows. Let $(T,M)$ be a complete local ring such that $\Q\subseteq T$ and $\dim T\geq 1$, and further suppose that $T/M$ is countable, $T$ is reduced, and $T$ is equidimensional. Beginning with a countable local domain $R_0$ whose completion is $T$, we construct an ascending chain of $BR_0$-subrings of $T$, all with completion $T$. For each $BR_0$-subring in our chain, to get the next $BR_0$-subring in the chain, we first adjoin generators of certain prime ideals of $T$, whose properties are described in Lemma~\ref{lem:Q_J_countable}. We adjoin the generators in a way detailed in Lemma~\ref{lem:adjoin_gen_set} so that the resulting ring is indeed a $BR_0$-subring of $T$. We then use Lemma~\ref{lem:construct_precompletion_from_pbsubring} to show that one can construct a countable local domain from this $BR_0$-subring whose completion is $T$. This is the next ring in our ascending chain. The union of this ascending chain of rings is excellent, as shown in Theorem~\ref{thm:countable_excellent_domain_existence}. The union is also countable and has compleition $T$, so this is our desired countable excellent local domain.

When constructing our countable excellent domain, we adjoin elements of $T$ to $BR_0$-subrings so that the resulting ring is also a $BR_0$-subring of $T$. The next lemma, adapted from \cite[Lemma 11]{loepp97}, provides sufficient conditions on the elements that we are able to adjoin.

\begin{lemma}\label{lem:pb_subring_adjoin}
Suppose $(T,M)$ is a complete local ring and $(R_0,R_0\cap M)$ is a countable local domain with $R_0\subseteq T$ and $\widehat{R}_0=T$. Let $C$ be the maximal elements of $\Ass(T)$.  Let $(R,R\cap M)$ be a $BR_0$-subring of $T$, and let $x\in T$ such that, for all $P\in C$, we have that 
$x+P$ is transcendental over $R/(R\cap P) \cong R$ as an element of $T/P$. Then, $R'=R[x]_{R[x]\cap M}$ is a $BR_0$-subring of $T$.
\end{lemma}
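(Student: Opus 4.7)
The plan is to verify the three requirements to be a $BR_0$-subring in turn, with most of the work going into the condition $R'\cap P = (0)$ for every $P\in\Ass(T)$. The containment $R_0\subseteq R'$ is automatic from $R_0\subseteq R\subseteq R[x]\subseteq R'$.

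First I would dispose of the easy conditions. The ring $R[x]$ is a polynomial ring in one variable over the countable ring $R$, hence countable, and since $R[x]\cap M$ is countable, the localization $R'=R[x]_{R[x]\cap M}$ is countable. For the quasi-local structure, $R[x]\cap M$ is the contraction of the prime $M$ of $T$, so it is prime in $R[x]$, making $R'$ quasi-local. A short check shows that an element $a/s\in R'$ lies in $M$ if and only if $a\in R[x]\cap M$ (using that $s\notin R[x]\cap M$ forces $s$ to be a unit in $T$, since $s\in T\setminus M$), so the maximal ideal of $R'$ really is $R'\cap M$.

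The core of the argument is showing $R'\cap P=(0)$ for every $P\in\Ass(T)$. I would begin with a reduction: given $P\in\Ass(T)$, choose $Q\in C$ with $P\subseteq Q$, so that it suffices to prove $R'\cap Q=(0)$. Now take any $y\in R'\cap Q$ and write $y=a/s$ with $a,s\in R[x]$ and $s\notin R[x]\cap M$. Since $s$ is a unit in $T$, the relation $y\in Q$ forces $a=sy\in Q$, so I am reduced to showing that $R[x]\cap Q=(0)$ (as a subset of $T$). Write $a=\sum_{i=0}^n r_i x^i$ with $r_i\in R$, and project into $T/Q$:
\[
0 = a + Q = \sum_{i=0}^n r_i\,(x+Q)^i \quad\text{in } T/Q.
\]
Because $R\cap Q=(0)$ (from $R$ being a $BR_0$-subring and $Q\in\Ass(T)$), the natural map $R\to T/Q$ is injective and identifies $R$ with $R/(R\cap Q)$. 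By hypothesis, $x+Q$ is transcendental over $R/(R\cap Q)\cong R$ in $T/Q$, so each $r_i$ maps to $0$ in $T/Q$, and hence $r_i=0$ in $R$. Therefore $a=0$ in $T$, giving $y=0$, as required.

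The main point where care is needed is the reduction to $Q\in C$ and handling the localization properly: one has to know that denominators $s\notin R[x]\cap M$ are genuinely units in $T$ so that $a/s\in Q$ transfers to $a\in Q$, and one has to know that the injection $R/(R\cap Q)\hookrightarrow T/Q$ lets us apply the transcendence hypothesis to the coefficients $r_i\in R$. Both steps are clean once they are set up, so I do not expect a serious obstacle; the lemma is essentially a direct application of the transcendence condition on $x$ modulo each $Q\in C$, combined with the observation that the prime ideals of $T$ lying in $\Ass(T)$ are dominated by those in $C$.
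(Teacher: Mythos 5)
Your proposal is correct and follows essentially the same route as the paper: reduce to a maximal element $Q\in C$ of $\Ass(T)$ containing the given associated prime, clear denominators to pass from $R'$ to $R[x]$, and then use the transcendence of $x+Q$ over $R\cong R/(R\cap Q)$ in $T/Q$ to force all coefficients into $R\cap Q=(0)$. The only difference is that you spell out the localization and quasi-local details that the paper leaves implicit.
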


\begin{proof}
First notice that since $R$ is countable and $R_0\subseteq R$, we have that $R'$ is countable and $R_0\subseteq R'$. 
We now show that if $P'\in\Ass(T)$, then $R'\cap P'=(0)$. It suffices to show that $R[x]\cap P'=(0)$. Let $P \in C$ such that $P' \subseteq P$.  Suppose $u\in R[x]\cap P'$.  Then $u \in R[x]\cap P$, and $u$ is of the form $u=a_nx^n+\cdots +a_1x+a_0$ with $a_i\in R$. Notice that $R/(R\cap P) \cong R$ injects into $T/P$, so we can view $R$ as a subring of $T/P$. Thus, since $u\in P$ and
$x+P$ is transcendental over $R$ as an element of $T/P$, $a_i\in P$ for all $i$. But $a_i\in R\cap P=(0)$. Thus, $u=0$, so $R'\cap P'=(0)$, and $R'$ is a $BR_0$-subring of $T$.
\end{proof}

In the next lemma, which is adapted from \cite{loepp2018uncountable}, we describe a certain set of prime ideals. Our goal is to adjoin generators of such prime ideals to the ring we are constructing, as this will allow us to show that our final ring is excellent.

\begin{lemma}\label{lem:Q_J_countable}
Let $(T,M)$ be a complete local reduced ring and let $(R,R\cap M)$ be a countable local domain with $R\subseteq T$ and $\widehat{R}=T$. Then,
\[\bigcup_{P\in\Spec(R)}\{Q\in\Spec(T)\mid Q\in\min I\text{ for $I$ where }\Sing(T/PT)=V(I/PT)\}\]
is a countable set. Furthermore, for any prime ideal $Q$ in this set, $Q\nsubseteq \frakp$ for all $\frakp\in \Ass(T)$.
\end{lemma}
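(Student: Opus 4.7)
The plan is to reduce the first claim to showing that $\Spec(R)$ is countable and that each $P$ contributes only finitely many primes $Q$, and then to prove the second claim by localizing at a purported containing minimal prime.

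For countability, I would first note that since $R$ is a countable Noetherian ring, every ideal of $R$ is finitely generated, so the set of ideals of $R$ is the image of the countable set of finite tuples of elements of $R$ under the ``generates'' map, and is therefore countable; in particular, $\Spec(R)$ is countable. For each $P\in\Spec(R)$, the ring $T/PT$ is a complete local Noetherian ring and hence excellent, so by Lemma~\ref{lem:sing_closed}, $\Sing(T/PT)=V(I/PT)$ for some ideal $I$ of $T$ containing $PT$. Although $I$ is not uniquely determined, any two such ideals share the same radical, so the set of minimal primes over $I$ is well-defined and, since $T$ is Noetherian, finite. The set in the lemma is then a countable union of finite sets, hence countable.

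For the second claim, suppose $Q$ belongs to this set and, for contradiction, that $Q\subseteq\frakp$ for some $\frakp\in\Ass(T)$. Since $T$ is reduced, $\Ass(T)=\Min(T)$, so $\frakp\in\Min(T)$, and $T_\frakp$ is a zero-dimensional reduced local ring, i.e., a field. Its unique prime ideal corresponds to $\frakp$, so the only prime of $T$ contained in $\frakp$ is $\frakp$ itself, forcing $Q=\frakp$; in particular $Q\in\Min(T)$ and $T_Q$ is also a field. Since $PT\subseteq I\subseteq Q$, we have $PT_Q\subseteq QT_Q=0$, so $(T/PT)_{Q/PT}\cong T_Q$ is a field and hence regular. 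Thus $Q/PT\notin\Sing(T/PT)=V(I/PT)$, contradicting the fact that $Q\supseteq I$.

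I expect the main subtlety to be the mild ambiguity in the choice of $I$, which I would handle by passing to $\sqrt{I}$, whose minimal primes coincide with those of $I$ and depend only on the closed set $\Sing(T/PT)$. The remaining steps are straightforward applications of standard facts: complete local Noetherian rings are excellent, reduced zero-dimensional local rings are fields, and ideals in a Noetherian ring have only finitely many minimal primes.
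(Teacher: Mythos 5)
Your proof is correct. The countability argument is essentially identical to the paper's (countable Noetherian ring has countable spectrum; each $P$ contributes the finitely many minimal primes of $I$; your remark about passing to $\sqrt{I}$ to resolve the ambiguity in $I$ is a nice touch the paper leaves implicit). For the second claim, however, you take a genuinely different and more uniform route. The paper splits into two cases on $\height(P)$: if $\height(P)\geq 1$ it picks a nonzero $a\in P\subseteq R$, which cannot be a zero divisor of $T$ because $R$ is a domain with $\widehat{R}=T$, so $a\in PT\subseteq Q$ already forces $Q\nsubseteq\frakp$; if $P=(0)$ it invokes Serre's $(R_0)$ condition for the reduced ring $T$ to see that $I$ (with $\Sing(T)=V(I)$) cannot sit inside a minimal prime. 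Your argument subsumes both cases at once: $Q\subseteq\frakp\in\Ass(T)=\Min(T)$ forces $Q=\frakp$ to be minimal, $T_Q$ is then a field since $T$ is reduced, and because $PT\subseteq Q$ the localization $(T/PT)_{Q/PT}\cong T_Q$ is regular, contradicting $Q\supseteq I$ with $\Sing(T/PT)=V(I/PT)$. What the paper's case split buys is that the $\height(P)\geq 1$ case does not need reducedness of $T$ (only the domain hypothesis on $R$); what your approach buys is brevity and the observation that the real obstruction is simply that a minimal prime of a reduced ring can never index a singular point of $T/PT$ once $PT$ is contained in it. Both are valid.
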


\begin{proof}
Since $R$ is countable and Noetherian, $\Spec(R)$ is countable. Thus, it suffices to show that the set is countable with respect to any fixed $P\in\Spec(R)$. Let $P\in\Spec(R)$. Since $T$ is a complete local ring, $T/PT$ is excellent. By Lemma~\ref{lem:sing_closed}, $\Sing(T/PT)=V(I/PT)$ for some ideal $I$ of $T$ containing $PT$. Consider the set of minimal prime ideals $Q$ of $I$. Since $T$ is Noetherian, this set is finite.

We now show that for any $Q$ in this set, we have that $Q\nsubseteq \mathfrak{p}$ for all $\mathfrak{p}\in\Ass(T)$. Fix $P\in\Spec(R)$, and consider $\mathcal{C}=\{Q\in\Spec(T)\mid Q\in\min I\text{ for $I$ where }\Sing(T/PT)=V(I/PT)\}$. First suppose $\text{ht}(P)\geq 1$ in $R$. Then, since $R$ is a domain and $\widehat{R}=T$, there exists $a\in P$ such that $a$ is not a zero divisor of $T$, i.e., $a\notin \mathfrak{p}$ for all $\mathfrak{p}\in\Ass(T)$. Since $PT\subseteq I\subseteq Q$, we have that $a\in Q$; thus, $Q\nsubseteq \mathfrak{p}$ for all associated prime ideals $\mathfrak{p}$ of $T$.

Now suppose $\text{ht}(P)=0$. Since $R$ is a domain, this means that $P=(0)$. Then, we are considering $I$ such that $\Sing(T)=V(I)$. 
Since $T$ is reduced, it satisfies Serre's $(R_0)$ condition, meaning for all $\mathfrak{q}\in\Spec(T)$ with $\height(\mathfrak{q})=0$, we have that $T_{\mathfrak{q}}$ is regular. Thus, it cannot be that $I$ is contained in a prime ideal of height $0$; otherwise, such a prime ideal would be in the singular locus of $T$, which is impossible. Notice that $\Min(T)=\Ass(T)$ since $T$ is a reduced Noetherian ring; this implies that $\text{ht}(\frakp)=0$ for all $\frakp\in\Ass(T)$. Thus, it must be that $I\nsubseteq \frakp$ for all $\frakp\in\Ass(T)$. Since $I\subseteq Q$, we also have that $Q\nsubseteq \frakp$ for all $\frakp\in\Ass(T)$.
\end{proof}

Next, we adjoin generating sets for the prime ideals described in the previous lemma. In order to accomplish this, we make use of the following result, which is a stronger version of the Prime Avoidance Theorem.  In particular, it enables us to avoid cosets of certain prime ideals.

\begin{lemma}[\cite{pippa}, Lemma 2.4]\label{lem:pippa_prime_avoidance}
Let $(T,M)$ be a complete local ring such that $\dim T\geq 1$, let $C$ be a finite set of nonmaximal prime ideals of $T$ such that no ideal in $C$ is contained in another ideal of $C$, and let $D$ be a subset of $T$ such that $\abs{D}<\abs{T}$. Let $I$ be an ideal of $T$ (not necessarily a proper ideal) such that $I\nsubseteq P$ for all $P\in C$. Then $I\nsubseteq\bigcup\{r+P\mid P\in C, r\in D\}$.
\end{lemma}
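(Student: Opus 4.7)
The plan is to combine a classical prime-avoidance construction with a cardinality argument in each quotient $T/P$. The key input is Lemma~\ref{lem:t_mod_p_uncountable}: since every $P\in C$ is nonmaximal and $\dim T\geq 1$, we have $|T/P|=|T|\geq c$, so $|D|<|T/P|$ for every $P\in C$.

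Write $C=\{P_1,\ldots,P_n\}$. First, for each $i$, I would build a ``witness'' element $x_i\in I$ satisfying $x_i\in P_j$ for every $j\neq i$ and $x_i\notin P_i$. This is a standard prime-avoidance construction: pick $a_i\in I\setminus P_i$ (exists since $I\nsubseteq P_i$) and, using the no-containment hypothesis on $C$, pick $b_{i,j}\in P_j\setminus P_i$ for each $j\neq i$; then $x_i:=a_i\prod_{j\neq i}b_{i,j}$ works, because $P_i$ is prime so the product is not in $P_i$, while $x_i\in I\cap\bigcap_{j\neq i}P_j$ by construction.

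The heart of the argument is to produce a single element of $I$ avoiding every coset $r+P$ with $P\in C$, $r\in D$, by searching among elements of the form
\[
y=\sum_{i=1}^n t_i x_i,\quad t_i\in T.
\]
Since $x_j\in P_i$ for $j\neq i$, we have $y\equiv t_i x_i\pmod{P_i}$, so the requirement ``$y\notin r+P_i$ for every $r\in D$'' becomes ``$t_i\,\overline{x_i}\neq\overline{r}$ in $T/P_i$ for every $r\in D$.'' Now $T/P_i$ is a domain and $\overline{x_i}\neq 0$, so by cancellation, for each fixed $\overline{r}$ there is at most one residue $\overline{t_i}\in T/P_i$ with $\overline{t_i}\,\overline{x_i}=\overline{r}$. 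Hence the set of ``bad'' residues for $t_i$ has cardinality at most $|D|<|T/P_i|$, and one can pick $t_i\in T$ whose image in $T/P_i$ avoids this bad set. The resulting $y$ lies in $I$ but in no coset $r+P$ for $P\in C$ and $r\in D$, contradicting the assumption $I\subseteq\bigcup\{r+P\mid P\in C,\,r\in D\}$.

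The main obstacle is the leap from ``$I$ avoids each prime $P_i$'' to ``$I$ avoids every shifted coset $r+P_i$ simultaneously for all $i$ and all $r\in D$.'' The device that makes this work is writing the candidate element as the linear combination $\sum t_i x_i$, which decouples the $n$ congruence conditions: the $i$th congruence only constrains $t_i$ modulo $P_i$, turning the problem into $n$ independent avoidance problems in the domains $T/P_i$, each solved by cancellation together with the cardinality bound $|D|<|T/P_i|$ supplied by Lemma~\ref{lem:t_mod_p_uncountable}.
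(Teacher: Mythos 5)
Your proof is correct and complete: the witness elements $x_i\in\bigl(I\cap\bigcap_{j\neq i}P_j\bigr)\setminus P_i$ decouple the $n$ coset conditions, and the cancellation-plus-cardinality argument in each domain $T/P_i$ (via Lemma~\ref{lem:t_mod_p_uncountable}, which gives $\abs{D}<\abs{T}=\abs{T/P_i}$) finishes it. The paper does not reprove this lemma but cites it from Charters--Loepp, and your argument is essentially the standard proof of this coset-avoidance result.
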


The following lemma is inspired by a procedure from \cite[Lemma 3.6]{SMALL17}. It describes a method for adjoining a generating set for a prime ideal to a $BR_0$-subring to obtain a larger $BR_0$-subring. 

\begin{lemma}\label{lem:adjoin_gen_set}
Suppose $(T,M)$ is a complete local ring with $\dim T\geq 1$, $(R_0,R_0\cap M)$ is a countable local domain with $R_0\subseteq T$ and $\widehat{R}_0=T$, and $(R, R \cap M)$ is a $BR_0$-subring of $T$. Let $Q\in\Spec(T)$ such that $Q\nsubseteq P$ for all $P\in\Ass(T)$. Then there exists a $BR_0$-subring of $T$, $(R', R' \cap M)$, such that $R\subseteq R'$ and $R'$ contains a generating set for $Q$.
\end{lemma}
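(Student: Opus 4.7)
The plan is to iteratively adjoin carefully chosen elements of $T$ to $R$, applying Lemma~\ref{lem:pb_subring_adjoin} at each stage to preserve the $BR_0$-subring property. Let $C$ denote the set of maximal elements of $\Ass(T)$. Since $R_0$ is a local domain with $\widehat{R}_0 = T$ and $\dim T \geq 1$, Theorem~\ref{thm:lech_char_domain} forces $M\notin\Ass(T)$, so every $P\in C$ is nonmaximal; Lemma~\ref{lem:t_mod_p_uncountable} then gives $\abs{T}\geq c$. Because $T$ is Noetherian, fix a finite generating set $q_1,\ldots,q_n$ of $Q$.

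First I would adjoin a ``generic'' element $y_0\in Q$. For each $P\in C$, since $R$ is countable and $T/P$ is a domain, the set of elements of $T/P$ algebraic over (the image of) $R$ is countable; hence there is a countable $D_0\subseteq T$ such that $\bigcup\{\alpha+P\mid P\in C,\ \alpha\in D_0\}$ contains every $r\in T$ for which $r+P$ is algebraic over $R$ in $T/P$ for some $P\in C$. Applying Lemma~\ref{lem:pippa_prime_avoidance} with $I = Q$ (which is not contained in any $P\in C$ by hypothesis) yields $y_0\in Q$ such that $y_0+P$ is transcendental over $R$ in $T/P$ for every $P\in C$. By Lemma~\ref{lem:pb_subring_adjoin}, $R_1 := R[y_0]_{R[y_0]\cap M}$ is a $BR_0$-subring of $T$; moreover $y_0\notin P$ for every $P\in C$, so $y_0T\not\subseteq P$ for every $P\in C$.

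Next I would inductively construct a chain $R_1\subseteq R_2\subseteq\cdots\subseteq R_{n+1}$ of $BR_0$-subrings by choosing, at each stage $i = 1, \ldots, n$, an element $y_i\in q_i + y_0T$. Countability of $R_i$ produces, as above, a countable set $A_i\subseteq T$ such that $\bigcup\{\alpha+P\mid P\in C,\ \alpha\in A_i\}$ captures every $r\in T$ with $r+P$ algebraic over $R_i$ in $T/P$ for some $P\in C$. Setting $D_i := \{\alpha - q_i\mid \alpha\in A_i\}$ (still countable) and applying Lemma~\ref{lem:pippa_prime_avoidance} to the ideal $y_0T$ produces $s_i\in y_0T$ with $s_i\notin\alpha + P$ for any $\alpha\in D_i$ and $P\in C$; then $y_i := q_i+s_i\in Q$ satisfies $y_i+P$ transcendental over $R_i$ in $T/P$ for every $P\in C$, and Lemma~\ref{lem:pb_subring_adjoin} gives that $R_{i+1} := R_i[y_i]_{R_i[y_i]\cap M}$ is a $BR_0$-subring.

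Setting $R' := R_{n+1}$, one checks that $\{y_0, y_1, \ldots, y_n\}\subseteq R'\cap Q$ is a generating set for $Q$ as an ideal of $T$: since $s_i\in y_0T$, we have $q_i = y_i - s_i \in (y_0, y_i)T$, so $Q = (q_1, \ldots, q_n)T \subseteq (y_0, y_1, \ldots, y_n)T \subseteq Q$. The main obstacle is satisfying the $BR_0$-property and the generating condition simultaneously: adjoining a $q_i$ directly can fail because a given $q_i$ may lie in (or be algebraic modulo) a prime of $C$. The key device is to first produce the transcendental $y_0\in Q$, so that the principal ideal $y_0T$ avoids every $P\in C$ and supplies just enough room, via prime avoidance, to perturb each $q_i$ into a transcendental-modulo-$C$ element that still lies in $Q$.
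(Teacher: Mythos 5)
Your proof is correct and follows essentially the same strategy as the paper: both apply Lemma~\ref{lem:pippa_prime_avoidance} to perturb each generator of $Q$ by an element of $Q$ (the paper uses $\alpha_i y$ with $\alpha_i\in M$ and $y\in Q$ off the associated primes; you use $s_i\in y_0T$) so that it becomes transcendental modulo each maximal associated prime, and then adjoin it via Lemma~\ref{lem:pb_subring_adjoin}. The only substantive difference is cosmetic: the paper retains an $n$-element generating set by observing that $1+\alpha_i\beta_{i,i}$ is a unit, whereas you enlarge the generating set to include $y_0$, which sidesteps that computation at the harmless cost of one extra generator.
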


\begin{proof}
Let $(x_1,\ldots,x_n)$ be a generating set for $Q$. We inductively define a chain of $BR_0$-subrings of $T$, $R=R_1\subseteq R_2\subseteq\cdots\subseteq R_{n+1}$ such that $R_{n+1}$ contains a generating set for $Q$. To construct $R_{i+1}$ from $R_i$, we show that there exists an element  $\tilde{x}_i$ of $T$ so that $R_{i+1}\coloneqq R_{i}[\tilde{x}_i]_{(R_{i}[\tilde{x}_i]\cap M)}$ is a $BR_0$-subring of $T$ and such that we can replace $x_i$ in the generating set of $Q$ with $\tilde{x}_i$. By assumption and the Prime Avoidance Theorem, $Q\nsubseteq \bigcup_{P\in\Ass(T)}P$; thus, there exists $y\in Q$ such that $y\notin P$ for all $P\in\Ass(T)$. 


We first construct $R_2$ by considering $R_1=R$. We find $\tilde{x}_1=x_1+\alpha_1y$ with $\alpha_1\in M$ so that $\tilde{x}_1+P$ is transcendental over $R_1/(R_1\cap P) = R/(R\cap P) \cong R$ as an element of $T/P$ for all $P$ maximal in $\Ass(T)$. 
To do this, first fix $P$, a maximal element of $\Ass(T)$, and consider $x_1+ty+P$ for some $t\in T$. Notice that since $R_1/(R_1\cap P)$ is countable, its algebraic closure in $T/P$ is also countable. 
In addition, each choice of $t+P$ gives a different $x_1+ty+P$, since $y\notin P$. Thus, for at most countably many choices of $t+P$, the element $x_1+ty + P$ of $T/P$ is algebraic over $R_1/(R_1\cap P)$.

Let $D_{(P)}$ be a full set of coset representatives of elements $t+P$ of $T/P$ that make $x_1+ty+P$ algebraic over $R_1/(R_1\cap P)$. Let $C$ be the maximal elements of $\Ass(T)$ and let $D_1=\bigcup_{P\in C}D_{(P)}$. Then, $D_1$ is countable since there are finitely many associated prime ideals $P$ of $T$ and $D_{(P)}$ is countable for each $P$. Since $\dim T\geq 1$, $T$ is uncountable by Lemma~\ref{lem:t_mod_p_uncountable}. It follows that $\abs{D_1}<\abs{T}$.  In addition, $T$ is the completion of a local domain and $\dim T\geq 1$, so $M\notin\Ass(T)$ by Theorem~\ref{thm:lech_char_domain}; then, $M\nsubseteq P$ for all $P\in C$. Applying Lemma~\ref{lem:pippa_prime_avoidance} with $I=M$, $C$ the maximal elements of $\Ass(T)$, and $D=D_1$, we have that
\[M\nsubseteq\bigcup\{r+P\mid r\in D_1,P\in C\},\]
so there exists $\alpha_1\in M$ such that $x_1+\alpha_1 y+P$ is transcendental over $R_1/(R_1\cap P)$ for every $P\in C$. Let $\tilde{x}_1\coloneqq x_1+\alpha_1y$. By Lemma~\ref{lem:pb_subring_adjoin}, we have that $R_2\coloneqq R_1[\tilde{x}_1]_{(R_1[\tilde{x}_1]\cap M)}$ is a $BR_0$-subring of $T$. 

We now show that $Q=(\tilde{x}_1,x_2,\ldots,x_{n})$. Write $y\in Q$ as
\[y=\beta_{1,1}x_1+\cdots+\beta_{1,n}x_{n},\]
for some $\beta_{1,i}\in T$. Notice that $\tilde{x}_1\in Q$, since $x_1,y\in Q$; then,
\[\tilde{x}_1=x_1+\alpha_1 y=(1+\alpha_1\beta_{1,1})x_1+\alpha_1\beta_{1,2}x_2+\cdots+\alpha_1\beta_{1,n}x_{n}.\]
Rearranging, we have that
\[x_1=(1+\alpha_1\beta_{1,1})^{-1}(\tilde{x}_1-\alpha_1\beta_{1,2}x_2-\cdots-\alpha_1\beta_{1,n}x_{n})\in(\tilde{x}_1,x_2,\ldots,x_{n}),\]
where $(1+\alpha_1\beta_{1,1})$ is a unit because $\alpha_1\in M$. Thus, we are able to replace $x_1$ with $\tilde{x}_1$ in our generating set for $Q$. Notice that this argument works even if $n=1$.

To construct $R_3$, let $D_2=\bigcup_{P\in C}D_{(P)}$, where $D_{(P)}$ is a full set of coset representatives of the elements $t + P$ of $T/P$ that make $x_2+ty+P \in T/P$ algebraic over $R_2/(R_2\cap P)$. Note that $D_2$ is countable. Again using Lemma~\ref{lem:pippa_prime_avoidance}, there exists $\alpha_2\in M$ such that $x_2+\alpha_2y+P \in T/P$ is transcendental over $R_2/(R_2\cap P)$ for every $P\in C$. Let $\tilde{x}_2\coloneqq x_2+\alpha_2y$. Then, $R_3\coloneqq R_2[\tilde{x}_2]_{R_2[\tilde{x}_2]\cap M}$ is a $BR_0$-subring of $T$ by Lemma~\ref{lem:pb_subring_adjoin}. We have that $Q=(\tilde{x}_1,\tilde{x}_2,x_3,\ldots,x_{n})$ by a similar argument as above by writing $y=\beta_{2,1}\tilde{x}_1+\beta_{2,2}x_2+\cdots+\beta_{2,n}x_{n}$ to show that $x_2\in(\tilde{x}_1,\tilde{x}_2,x_3,\ldots,x_{n})$. 

Repeat the above process for each $i=4,\ldots,n+1$ to obtain a chain of $BR_0$-subrings of $T$, $R_1\subseteq\cdots\subseteq R_{n+1}$ and have $Q=(\tilde{x}_1,\tilde{x}_2,\ldots,\tilde{x}_{n})$. By construction, each $\tilde{x}_i\in R_{i+1}$, so $R_{n+1}$ contains a generating set for $Q$. Thus, $R'=R_{n+1}$ is our desired $BR_0$-subring of $T$.
\end{proof}

After adjoining generating sets for prime ideals via Lemma \ref{lem:adjoin_gen_set}, we obtain a $BR_0$-subring of $T$ whose completion is not necessarily $T$. If $R$ is a $BR_0$-subring of $T$, then our goal is to build a $BR_0$-subring $R'$ from $R$ such that $R\subseteq R'$, and $\widehat{R'}=T$. 

The following lemma, which is a modification of \cite[Lemma 2.6]{pippa}, is used to accomplish this goal. In particular, we use Lemma \ref{lem:pb_subring_fgideal_contain} to show in Lemma \ref{lem:construct_precompletion_from_pbsubring} that one can close up ideals of a $BR_0$-subring of $T$. We then use Corollary \ref{cor:basering_complete_macine} to show that the resulting ring is Noetherian and its completion is $T$. 

\begin{lemma}\label{lem:pb_subring_fgideal_contain}
Suppose $(T,M)$ is a complete local ring with $\dim T\geq 1$, $(R_0,R_0\cap M)$ is a countable local domain with $R_0\subseteq T$ and $\widehat{R}_0=T$, and $(R,R \cap M)$ is a $BR_0$-subring of $T$. Let $I$ be a finitely generated ideal of $R$ and let $c\in IT\cap R$. Then there exists a $BR_0$-subring $(R', R' \cap M)$ of $T$ such that $R\subseteq R'$ and $c\in IR'$.
\end{lemma}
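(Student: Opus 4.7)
The plan is to build $R'$ as the final ring in an ascending chain of $BR_0$-subrings obtained by adjoining, one at a time, carefully perturbed versions of the coefficients that express $c$ as an element of $IT$. Write $I=(x_1,\ldots,x_n)R$ and, since $c\in IT$, fix an expression $c=t_1x_1+\cdots+t_nx_n$ with $t_i\in T$. If $c=0$ then $c\in IR$ trivially and we take $R'=R$. Otherwise $c\in R\setminus\{0\}$, so $c\notin P$ for any $P\in\Ass(T)$ because $R\cap P=(0)$. In particular, $IT\not\subseteq P$ for every maximal $P\in\Ass(T)$, and by the Prime Avoidance Theorem there exists $y\in I$ with $y\notin P$ for every maximal $P\in\Ass(T)$. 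This $y$ will serve as a ``safe direction'' in $T/P$ for the perturbations below.

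The plan is to construct $R=R_1\subseteq R_2\subseteq\cdots\subseteq R_{n+1}=R'$ inductively. Given $R_i$, set $\tilde{t}_i=t_i+\alpha_i y$ for an $\alpha_i\in M$ to be chosen, augmented by a Koszul-type correction that simultaneously perturbs some other coordinate $t_j$ so as to preserve the identity $c=\sum_j\tilde{t}_jx_j$. For each maximal $P\in\Ass(T)$, since $y\notin P$ the map $\alpha\mapsto t_i+\alpha y+P$ is injective on cosets of $P$ in $T$; since $R_i$ is countable, only countably many such cosets produce a $\tilde{t}_i+P$ algebraic over $R_i/(R_i\cap P)\cong R_i$ in $T/P$. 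Let $D_i$ be the union of these bad cosets over the finitely many maximal $P\in\Ass(T)$; then $|D_i|<|T|$ by Lemma~\ref{lem:t_mod_p_uncountable}. Because $\dim T\geq 1$, Theorem~\ref{thm:lech_char_domain} yields $M\notin\Ass(T)$, so $M\not\subseteq P$ for any maximal $P\in\Ass(T)$, and Lemma~\ref{lem:pippa_prime_avoidance} (with $I=M$ and $D=D_i$) produces $\alpha_i\in M$ outside every bad coset.

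Lemma~\ref{lem:pb_subring_adjoin} then guarantees that $R_{i+1}=R_i[\tilde{t}_i]_{R_i[\tilde{t}_i]\cap M}$ is a $BR_0$-subring of $T$. After $n$ steps, $R'=R_{n+1}$ contains $\tilde{t}_1,\ldots,\tilde{t}_n$ with $c=\sum\tilde{t}_ix_i\in(x_1,\ldots,x_n)R'=IR'$, as desired.

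The hardest part will be reconciling the Koszul corrections needed to preserve the identity $c=\sum\tilde{t}_ix_i$ with the freedom needed at each step to make $\tilde{t}_i+P$ transcendental over $R_i$ in $T/P$ for every maximal $P\in\Ass(T)$. The most delicate case is when some maximal $P$ contains all but one of the generators $x_i$, say $x_{j_P}$: then the Koszul syzygies of $(x_1,\ldots,x_n)$ cannot perturb $\tilde{t}_{j_P}$ modulo $P$, and the $BR_0$-condition $R_{j_P+1}\cap P=(0)$ has to be verified directly from the relation $x_{j_P}\tilde{t}_{j_P}=c-\sum_{j\neq j_P}\tilde{t}_jx_j$ forced by preservation of the identity, using that $T/P$ is a domain so that $x_{j_P}$ is a non-zero-divisor modulo $P$.
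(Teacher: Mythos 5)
Your overall strategy is the same as the paper's: perturb the coefficients $t_i$ in the expression $c=\sum_i t_ix_i$ so that they become transcendental, modulo each maximal element of $\Ass(T)$, over the ring built so far (via the coset-avoidance Lemma~\ref{lem:pippa_prime_avoidance}), adjoin them one at a time using Lemma~\ref{lem:pb_subring_adjoin}, and treat the one coefficient that cannot be perturbed by a direct computation. The paper organizes this as an induction on the number of generators, with the pinned coefficient appearing as the principal-ideal base case. There are, however, two concrete problems with your execution.

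First, the perturbation direction is wrong. A perturbation $(\delta_1,\ldots,\delta_n)$ of the coefficients preserves the identity $c=\sum_j t_jx_j$ only if it is a syzygy of $(x_1,\ldots,x_n)$, which forces $\delta_ix_i\in(x_j: j\neq i)T$. You take $\delta_i=\alpha_iy$ with $y\in I$ arbitrary; if $y$ has a nonzero $x_i$-component $b_ix_i$, then $\alpha_ib_ix_i^2$ need not lie in $(x_j: j\neq i)T$, and no correction to the other coordinates can restore the identity. The direction must be taken in the ideal generated by the \emph{other} generators; the paper uses the single generator $y_2$ (perturbing $t_1\mapsto t_1+y_2t$ and compensating with $t_2\mapsto t_2-y_1t$). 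Note that $y_2$, being a nonzero element of $R$, automatically lies outside every $P\in\Ass(T)$ since $R\cap P=(0)$; for the same reason your appeal to prime avoidance to find $y\in I$ off the associated primes is vacuous. Second, your ``most delicate case'' cannot occur: no $P\in\Ass(T)$ contains any nonzero generator $x_i$, again because $x_i\in R$ and $R\cap P=(0)$. The genuine obstruction is different and unconditional: once $\tilde t_1,\ldots,\tilde t_{n-1}$ are chosen, the last coefficient is pinned by the relation $\tilde t_nx_n=c-\sum_{j<n}\tilde t_jx_j$ (uniquely, since $x_n$ is a non-zero-divisor of $T$) and so cannot be made transcendental. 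Your proposed remedy for that coordinate---adjoin it anyway and verify $R'\cap P=(0)$ directly from the relation, clearing denominators with powers of $x_n$ and using that $x_n$ is a non-zero-divisor---is exactly the paper's base-case argument and does work. With the perturbation directions corrected as above, your single-pass version of the argument goes through and is essentially a flattened form of the paper's induction.
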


\begin{proof}
We induct on the number of generators of $I$. For the base case, suppose $I=aR$. If $a=0$, then $c=0$, so $R$ is the desired $BR_0$-subring. Now consider the case where $a\neq 0$. Then, $c=au$ for some $u\in T$. We show that $R'\coloneqq R[u]_{(R[u]\cap M)}$ is the desired $BR_0$-subring. First, note that $R[u]_{(R[u]\cap M)}$ is countable, since $R$ is countable. Now suppose $f\in R[u]\cap P$ for $P\in\Ass(T)$. Then, $f=r_n u^n+\cdots+r_1u+r_0\in P$ for $r_i\in R$. Multiplying through by $a^n$, we get 
\[a^nf=r_n(au)^n+\cdots+r_1a^{n-1}(au)+r_0a^n=r_n c^n+\cdots+r_1a^{n-1}c+r_0a^n.\]
Notice that $r_i,c,a\in R$ for all $i$ so $a^nf$ is an element of $P\cap R=(0)$. Since $a\in R$ with $a\neq 0$, it cannot be that $a$ is a zero divisor in $T$. Thus, it must be that $f=0$, showing that $R[u]_{(R[u]\cap M)}$ is indeed a $BR_0$-subring of $T$.  Since $c = au$, we have that $c \in IR[u]_{(R[u]\cap M)} = IR'$.

Now suppose $I$ is generated by $m>1$ elements, and suppose that the lemma holds true for all ideals of $R$ generated by $m-1$ elements. Let $I=(y_1,\ldots,y_m)R$ where $y_i\neq 0$ for all $i=1,2,\ldots,m$. Then, $c=y_1t_1+\cdots+ y_mt_m$ for some $t_i\in T$. Note that
\[c=y_1t_1+(y_1y_2 t-y_1y_2 t)+y_2t_2+\cdots+y_mt_m=y_1(t_1+y_2t)+y_2(t_2-y_1t)+y_3t_3+\cdots+y_mt_m\]
for any $t\in T$. Let $x_1=t_1+y_2t$ and $x_2=t_2-y_1t$, where we will choose the element $t$ later. Now, let $P$ be a a maximal element of $\Ass(T)$. If $(t_1+y_2t)+P=(t_1+y_2t')+P$ for some $t,t'\in T$, then it must be the case that $y_2(t-t')\in P$. But $y_2\in R$ with $y_2\neq 0$ and $R\cap P=(0)$, so we must have that $t-t'\in P$. Thus, $t+P=t'+P$. The contrapositive of this result indicates that if $t+P\neq t'+P$, then $(t_1+y_2t)+P\neq (t_1+y_2t')+P$. Let $D_{(P)}$ be a full set of coset representatives of the cosets $t+P$ that make $x_1+P$ algebraic over $R/(R\cap P)$ in $T/P$. Since $R$ is countable, the algebraic closure of $R/(R\cap P)$ in $T/P$ is countable, so $D_{(P)}$ is countable. Let $C$ be the maximal elements of $\Ass(T)$ and let $D=\bigcup_{P\in C}D_{(P)}$. Note that $D$ is also countable, and, by Lemma~\ref{lem:t_mod_p_uncountable}, $T$ is uncountable.  Therefore, $|D| < |T|$. In addition, since $T$ is the completion of a local domain and $\dim T\geq 1$, we have by Theorem~\ref{thm:lech_char_domain} that $M\notin \Ass(T)$, and so $M\nsubseteq P$ for all $P\in C$. We now use Lemma~\ref{lem:pippa_prime_avoidance} with $C$ the maximal elements of $\Ass(T)$ and $I=M$ to find an element $t\in M\subseteq T$ such that $x_1+P \in T/P$ is transcendental over $R/(R\cap P)$ for every $P\in C$. By Lemma~\ref{lem:pb_subring_adjoin}, we have that $R''\coloneqq R[x_1]_{(R[x_1]\cap M)}$ is a $BR_0$-subring of $T$. Now let $J=(y_2,\ldots,y_m)R''$ and $c^*=c-y_1x_1$. Notice that $c,y_1x_1\in R''$, and 
\[c^*=(y_1(t_1+y_2t)+y_2(t_2-y_1t)+y_3t_3+\cdots+y_mt_m)-y_1x_1=y_2(t_2-y_1t)+y_3t_3+\cdots+y_mt_m,\]
because of how we have defined $x_1$. Thus, we have that $c^*\in JT\cap R''$.

We can now use our induction assumption to draw the conclusion that there exists a $BR_0$-subring $(R', R' \cap M)$ of $T$ such that $R''\subseteq R'\subseteq T$ and $c^*\in JR'$. Thus, $c^*=y_2r_2+\cdots+y_m r_m$ for some $r_i\in R'$. It follows that $c=y_1x_1+y_2r_2+\cdots+y_mr_m\in IR'$, and thus $R'$ is the desired $BR_0$-subring.
\end{proof}

The following lemma ensures that, given a $BR_0$-subring $R$ of $T$, one can construct a $BR_0$-subring from $R$ whose completion is $T$.

\begin{lemma}\label{lem:construct_precompletion_from_pbsubring}
Suppose $(T,M)$ is a complete local ring with $\dim T\geq 1$ and $(R_0,R_0\cap M)$ is a countable local domain with $R_0\subseteq T$ and $\widehat{R}_0=T$. Let $(R,R\cap M)$ be a $BR_0$-subring of $T$. Then there exists a $BR_0$-subring of $T$, $(R',R'\cap M)$, such that $R\subseteq R'\subseteq T$, and, if $I$ is a finitely generated ideal of $R'$, then $IT\cap R'= IR'$. Thus, $R'$ is Noetherian and $\widehat{R'}=T$.
\end{lemma}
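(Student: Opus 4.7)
The plan is an iterative countable-chain construction, repeatedly applying Lemma \ref{lem:pb_subring_fgideal_contain} to ``close up'' all finitely generated ideals of successive rings, and then invoking Corollary \ref{cor:basering_complete_macine} to conclude. A single application of Lemma \ref{lem:pb_subring_fgideal_contain} only handles one pair $(I,c)$ with $c\in IT\cap R$. Since $R$ is countable, there are only countably many such pairs, and enumerating them allows us to produce a $BR_0$-subring in which every element of $IT\cap R$ for every finitely generated ideal $I$ of $R$ has been absorbed into the extension. However, new finitely generated ideals arise in the enlarged ring that must also be handled, so a single round is insufficient; we must iterate.

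More precisely, I would inductively construct an ascending chain of $BR_0$-subrings of $T$,
\[R=S_0\subseteq S_1\subseteq S_2\subseteq\cdots,\]
such that for each $n$, every finitely generated ideal $I$ of $S_n$, and every $c\in IT\cap S_n$, we have $c\in IS_{n+1}$. To build $S_{n+1}$ from $S_n$, use that $S_n$ is countable to enumerate the countably many pairs $\{(I_k,c_k)\}_{k\in\N}$ with $I_k$ a finitely generated ideal of $S_n$ and $c_k\in I_kT\cap S_n$. Then repeatedly apply Lemma \ref{lem:pb_subring_fgideal_contain} to produce $BR_0$-subrings $S_n=S_n^{(0)}\subseteq S_n^{(1)}\subseteq\cdots$ of $T$ with $c_k\in I_kS_n^{(k+1)}$, and let $S_{n+1}=\bigcup_k S_n^{(k)}$. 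A countable ascending union of $BR_0$-subrings of $T$ is again a $BR_0$-subring of $T$ (countability is preserved, and $S_{n+1}\cap P=\bigcup_k(S_n^{(k)}\cap P)=(0)$ for each $P\in\Ass(T)$), so $S_{n+1}$ is a $BR_0$-subring of $T$ with the required property.

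Set $R'=\bigcup_{n\geq 0}S_n$, which is itself a $BR_0$-subring of $T$ by the same union argument. To verify the key property, let $I$ be a finitely generated ideal of $R'$. Since $I$ has finitely many generators, they all lie in some $S_n$; let $J$ be the ideal of $S_n$ they generate, so $JR'=I$ and $JT=IT$. Given $c\in IT\cap R'$, pick $m\geq n$ with $c\in S_m$. Then $JS_m$ is a finitely generated ideal of $S_m$ with $(JS_m)T=JT=IT$, so $c\in (JS_m)T\cap S_m$. By the construction of $S_{m+1}$, we obtain $c\in (JS_m)S_{m+1}\subseteq JR'=IR'$. The reverse inclusion $IR'\subseteq IT\cap R'$ is immediate, so the hypotheses of Corollary \ref{cor:basering_complete_macine} are satisfied, yielding that $R'$ is Noetherian with $\widehat{R'}=T$.

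The main obstacle is purely organizational: closing up one finitely generated ideal can create new finitely generated ideals that must themselves be closed up, so the construction cannot be carried out in a single pass. The two-level enumeration above, one induction on $n$ and a second enumeration of pairs within each $S_n$, together with the fact that $BR_0$-subrings are preserved under countable ascending unions, is what makes the argument go through and ensures that every finitely generated ideal of the final union $R'$ is properly accounted for.
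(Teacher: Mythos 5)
Your proposal is correct and follows essentially the same strategy as the paper's proof: a two-level countable enumeration (an inner loop applying Lemma \ref{lem:pb_subring_fgideal_contain} to all pairs $(I,c)$ of a given ring, and an outer iteration over the resulting chain), followed by the same verification that finitely generated ideals of the union are closed under contraction from $T$, and an appeal to Corollary \ref{cor:basering_complete_macine}. No substantive differences.
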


\begin{proof}
Consider the set
\[\Omega=\{(I,c)\mid I\text{ is a finitely generated ideal of $R$, }c\in IT\cap R\}.\]
Since $R$ is countable, so is $\Omega$. Enumerate $\Omega$ and let $1$ denote its first element. Now, we recursively define an ascending chain of $BR_0$-subrings, beginning with $R_1=R$. Given the $k^{\text{th}}$ element $(I,c)$ with the $BR_0$-subring $R_k$ defined, we will construct $R_{k+1}$. Let $I=(a_1,\ldots,a_\ell)R$ for $a_i\in R$. Notice that $I'=(a_1,\ldots,a_\ell)R_k$ is a finitely generated ideal of $R_k$, and if $c\in IT\cap R$, then $c\in IT\cap R\subseteq I'T\cap R_k$, since $R=R_1\subseteq R_k$. Now let $R_{k+1}$ be the $BR_0$-subring obtained from Lemma~\ref{lem:pb_subring_fgideal_contain} so that $R_k\subseteq R_{k+1}\subseteq T$ and  $c\in I'R_{k+1}$. Then define $S_1=\bigcup_{i=1}^\infty R_i$. Since this is a countable union of $BR_0$-subrings, we have that $S_1$ is a $BR_0$-subring of $T$. By our construction, if $I$ is a finitely generated ideal of $R$ and $c\in IT\cap R$, then for some $i\in\Z^+$, we have that $c\in IR_i\subseteq IS_1$; thus, $IT\cap R\subseteq IS_1$.

We repeat this process using $S_1$ in place of $R$ to obtain a $BR_0$-subring $S_2$ such that $IT\cap S_1\subseteq IS_2$ for every finitely generated ideal $I$ of $S_1$. Continuing this process results in an ascending chain of $BR_0$-subrings $R\subseteq S_1\subseteq S_2\subseteq\cdots$ such that $IT\cap S_i\subseteq IS_{i+1}$ for every finitely generated ideal $I$ of $S_i$. 

Let $R'\coloneqq\bigcup_{i=1}^\infty S_i$. Note that $R'$ is a $BR_0$-subring of $T$. We show that, for every finitely generated ideal $I$ of $R'$, $IT\cap R'=I$. Consider some finitely generated ideal $I$ of $R'$. Clearly, $IR'\subseteq IT\cap R'$, so suppose that $c\in IT\cap R'$. Let $I=(a_1,\ldots,a_m)R'$. For some $k$, we have that $a_i\in S_k$ for all $i$ and $c\in S_k$. Then,
\[c\in (a_1,\ldots,a_m)T\cap S_k\subseteq (a_1,\ldots,a_m)S_{k+1}\subseteq IR'.\]
Thus we have $IT\cap R'=IR'$. 

Since $R_0\subseteq R\subseteq R'$ and for every finitely generated ideal $I$ of $R'$, we have that $IT\cap R'=IR'$, by Corollary~\ref{cor:basering_complete_macine}, we have that $R'$ is Noetherian and $\widehat{R'}=T$.
\end{proof}

We are now able to give sufficient conditions for a complete local ring containing the rationals and of dimension at least one to be the completion of a countable excellent local domain. The proof of Theorem \ref{thm:countable_excellent_domain_existence} is adapted from the proof of \cite[Theorem 3.5]{loepp2018uncountable}.

\begin{theorem}\label{thm:countable_excellent_domain_existence}
Let $(T,M)$ be a complete local ring containing the rationals with $\dim T\geq 1$.  Suppose that the following conditions are satisfied:
\begin{enumerate}
    \item $T$ is equidimensional,
    \item $T$ is reduced, and
    \item $T/M$ is countable.
\end{enumerate}
Then there exists a countable excellent local domain $(S,S\cap M)$ such that $S\subseteq T$ and $\widehat{S}=T$.
\end{theorem}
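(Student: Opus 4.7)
The plan is to construct $S$ as the union of a carefully-built countable ascending chain of $BR_0$-subrings of $T$. First, to obtain a base ring, I observe that since $\Q \subseteq T$ every nonzero integer of $T$ is a unit, and since $T$ is reduced with $\dim T \geq 1$ we have $\Ass(T) = \Min(T) \not\ni M$; hence Theorem~\ref{prop:small19_countable_domain} furnishes a countable local domain $(R_0, R_0 \cap M)$ with $\widehat{R_0} = T$ to serve as a starting point for the $BR_0$-machinery.

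Next, I recursively build a chain $R_0 \subseteq R_1 \subseteq R_2 \subseteq \cdots$ of $BR_0$-subrings of $T$, each with completion $T$. To pass from $R_i$ to $R_{i+1}$, I first enumerate the countable set
\[ \bigcup_{P\in \Spec(R_i)} \{ Q \in \Spec(T) \mid Q \in \min I \text{ where } \Sing(T/PT) = V(I/PT)\} \]
furnished by Lemma~\ref{lem:Q_J_countable}; each such $Q$ satisfies $Q \nsubseteq \frakp$ for every $\frakp \in \Ass(T)$, so Lemma~\ref{lem:adjoin_gen_set} lets me adjoin a generating set for each such $Q$ in succession to obtain a countable $BR_0$-subring, and then Lemma~\ref{lem:construct_precompletion_from_pbsubring} closes this up to a $BR_0$-subring $R_{i+1}$ with $\widehat{R_{i+1}} = T$. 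Set $S \coloneqq \bigcup_{i \geq 0} R_i$, a countable quasi-local $BR_0$-subring of $T$. It is a domain because any relation $ab = 0$ in $S$ would force $a$ or $b$ to lie in some $\frakp \in \Ass(T)$, hence in $S \cap \frakp = (0)$. For any finitely generated ideal $I$ of $S$, its generators lie in some $R_j$, so $IT \cap S = IS$ follows from the faithful flatness of $T$ over $R_j$; Corollary~\ref{cor:basering_complete_macine} then gives that $S$ is Noetherian with $\widehat{S} = T$.

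The main obstacle is showing $S$ is excellent, which by Lemma~\ref{lem:excellent_sufficient_criteria} reduces to verifying that $(T/PT)_Q$ is regular for every $P \in \Spec(S)$ and every $Q \in \Spec(T)$ with $Q \cap S = P$. The delicate point is that $P$ itself need not appear in any $\Spec(R_i)$, so the construction's guarantees do not apply directly. I would resolve this by noting that the ascending chain $\{(P \cap R_i) T\}_i$ of ideals in the Noetherian ring $T$ stabilizes at some $N$, and then verifying $PT = (P \cap R_N) T$ by observing that each element of $P$ lies in some $R_i$ and hence in $P \cap R_{\max(i,N)}$. Suppose for contradiction that $(T/PT)_Q$ is not regular. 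Then $Q$ contains some minimal prime $Q^*$ of an ideal $I_N$ with $\Sing(T/(P \cap R_N)T) = V(I_N/(P \cap R_N)T)$. By construction a generating set for $Q^*$ lies in $R_{N+1} \subseteq S$, and since $Q^* \cap S \subseteq Q \cap S = P$, those generators all lie in $P$; hence $Q^* \subseteq PT = (P \cap R_N)T$, and combined with $Q^* \supseteq (P \cap R_N)T$ this forces $Q^* = (P \cap R_N)T = I_N$. But then $\Sing(T/Q^*) = V(0) = \Spec(T/Q^*)$, which is absurd, since $T/Q^*$ is a domain whose generic localization is its fraction field, a regular local ring.
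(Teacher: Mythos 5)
Your proposal is correct and follows essentially the same route as the paper: the same base ring from Theorem~\ref{prop:small19_countable_domain}, the same iterated use of Lemmas \ref{lem:Q_J_countable}, \ref{lem:adjoin_gen_set}, and \ref{lem:construct_precompletion_from_pbsubring} along an ascending chain, and the same contradiction (forcing the relevant minimal prime of the singular-locus ideal to equal $PT$, whose localization is the fraction field of $T/PT$ and hence regular) to verify the hypothesis of Lemma~\ref{lem:excellent_sufficient_criteria}. The only cosmetic difference is that you identify $PT$ with $(P\cap R_N)T$ at a finite stage via the ascending chain condition on $\{(P\cap R_i)T\}$ in $T$, whereas the paper uses the finite generation of $P$ in the Noetherian ring $S$; both work.
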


\begin{proof}
Since $T$ contains the rationals, all nonzero integers are units, and so no integer of $T$ is a zero divisor.
Because $T$ is reduced and $\dim T\geq 1$, we have that $M\notin\Ass(T)$; thus, $T$ is the completion of a countable local domain, $(R_0,R_0\cap M)$, by Theorem~\ref{prop:small19_countable_domain}.

Let $S_0=R_0$. Notice that $S_0$ is a local countable domain and has completion $T$. We define an ascending chain of rings recursively. For each $S_i$, we ensure that it contains $R_0$ and that it satisfies the criteria of Lemma~\ref{lem:Q_J_countable}, i.e., that $(S_i,S_i\cap M)$ is a countable local domain with $S_i\subseteq T$ and $\widehat{S_i}=T$. Suppose that $(S_i,S_i\cap M)$ is a countable local domain satisfying such properties. We construct $S_{i+1}$ to satisfy these as well. 

By Lemma~\ref{lem:Q_J_countable}, the set
\[\bigcup_{P\in\Spec(S_i)}\{Q\mid Q\in\min I\text{ for $I$ where }\Sing(T/PT)=V(I/PT)\}\]
is countable, and for any $Q$ in the set, we have that $Q\nsubseteq \frakp$ for all $\frakp\in\Ass(T)$.

Enumerate this set, $(Q_j)_{j\in\Z^+}$. Since $T$ is Noetherian, each $Q_j$ is finitely generated. We recursively define an ascending chain of $BR_0$-subrings of $T$, $S^*_0\subseteq S^*_1\subseteq \cdots$. Let $S^*_0=S_i$. For each successive $S^*_k$, we ensure that it contains a generating set of $Q_1,\ldots,Q_k$. 

First consider $S^*_0=S_i$ and $Q_1$. Since $S^*_0$ is a domain and $\widehat{S^*_0} = T$, we have that $S^*_0 \cap \frakp = (0)$ for all $\frakp \in \Ass (T)$.  It follows that $S^*_0$ is a $BR_0$-subring of $T$.  By Lemma~\ref{lem:adjoin_gen_set}, there exists a $BR_0$-subring $S_1^*$ of $T$ such that $S_0^*\subseteq S_1^*$ and $S_1^*$ contains a generating set for $Q_1$. Now suppose that $S^*_k$ has been defined, and it is a $BR_0$-subring of $T$ that contains generating sets for $Q_1,\ldots,Q_k$. Consider $Q_{k+1}$; we can again apply Lemma~\ref{lem:adjoin_gen_set} to find a $BR_0$-subring of $T$, $S^*_{k+1}$, such that $S^*_k\subseteq S^*_{k+1}$ and such that $S^*_{k+1}$ contains a generating set for $Q_{k+1}$.

Let $S'_i\coloneqq\bigcup_{k=0}^\infty S^*_k$. Notice that this is a countable union of ascending $BR_0$-subrings of $T$, so we have that $S'_i$ is a $BR_0$-subring of $T$.
Let $(S_{i+1}, S_{i+1}\cap M)$ be the $BR_0$-subring of $T$ obtained by applying Lemma~\ref{lem:construct_precompletion_from_pbsubring} to $S'_i$, so $(S_{i+1},S_{i+1}\cap M)$ is countable local domain such that $S'_i\subseteq S_{i+1}\subseteq T$ and $\widehat{S}_{i+1}=T$.
Thus, $S_{i+1}$ satisfies the conditions of Lemma~\ref{lem:Q_J_countable} as desired. This also implies by Proposition~\ref{prop:completion_machine_converse} that $IT\cap S_{i+1}=I$ for any finitely generated ideal $I$ of $S_{i+1}$. Note that, for any
\[Q\in\bigcup_{P\in\Spec(S_i)}\{Q\mid Q\in\min(I)\text{ for $I$ where }\Sing(T/PT)=V(I/PT)\},\]
we have that $S_{i+1}$ contains a generating set for $Q$.

Define $S\coloneqq\bigcup_{i=0}^\infty S_i$. We show that $(S,S\cap M)$ is a countable excellent local domain such that $R_0\subseteq S\subseteq T$ and $\widehat{S}=T$. First, since each $S_i$ a $BR_0$-subring of $T$, we have that $S$ is a $BR_0$-subring of $T$ as well so $(S,S\cap M)$ is a countable quasi-local domain with $S\subseteq T$. Note that, for every $i$, since $\widehat{S}_i=T$, if $I$ is a finitely generated ideal of $S_i$, then $IT\cap S_i=IS_i$. We show that this holds for $S$ as well.

Suppose that $I$ is a finitely generated ideal of $S$. We clearly have that $IS\subseteq IT\cap S$, so we will show that $IT\cap S\subseteq IS$. Since $I$ is finitely generated, $I=(a_1,\ldots,a_m)S$ for $a_i\in S$. Let $c\in IT\cap S$. Choose $\ell$ such that $a_i,c\in S_\ell$. Then,
\[c\in(a_1,\ldots,a_m)T\cap S_\ell=(a_1,\ldots,a_m)S_\ell\subseteq IS.\]
Thus, $IT\cap S\subseteq IS$. Since $R_0\subseteq S$, by Corollary~\ref{cor:basering_complete_macine}, $S$ is Noetherian and has completion $T$.

Finally, since $T$ is equidimensional, we can show that $S$ is excellent using Lemma~\ref{lem:excellent_sufficient_criteria}, i.e., by showing that, for $P\in\Spec(S)$ and $Q\in\Spec(T)$ such that $Q\cap S=P$, $(T/PT)_Q$ is a regular local ring. Let $P\in\Spec(S)$ and $Q\in\Spec(T)$ such that $Q\cap S=P$. Suppose for contradiction that $Q/PT\in\Sing(T/PT)=V(I/PT)$. Then, $Q\supseteq \frakp\supseteq I$ for some minimal prime ideal $\frakp\in\Spec(T)$ of $I$. Since $\frakp\supseteq I\supseteq PT$, $\frakp/PT\in\Sing(T/PT)$. Furthermore,
\[P=PT\cap S\subseteq \frakp\cap S\subseteq Q\cap S=P.\]
Thus, $\frakp \cap S=P$.

Note that $P$ is finitely generated, so let $P=(p_1,\ldots,p_m)S$. Choose $i$ so that $p_j\in S_i$ for all $j=1,\ldots,m$. Then define $P'\coloneqq P\cap S_i$, and note that $P'\in\Spec(S_i)$. We first show that $P'T=PT$. Observe that
\[P'=P\cap S_i=(PT\cap S)\cap S_i=PT\cap S_i=(p_1,\ldots,p_m)T\cap S_i=(p_1,\ldots,p_m)S_i,\]
so then
\[P'T=((p_1,\ldots,p_m)S_i)T=(p_1,\ldots,p_m)T=PT.\]
Thus, $T/PT=T/P'T$, so since $\frakp/PT\in\Sing(T/PT)$, we have $\frakp/P'T\in\Sing(T/P'T)$. Next, we show that $\frakp=PT$. Since $\frakp\cap S=P$, we have $\frakp\cap S_i=P\cap S_i=P'$. So then,
\[\frakp\in\bigcup_{P'\in\Spec(S_i)}\{\mathfrak{q}\mid \mathfrak{q}\in\min I\text{ for $I$ where }V(I/P'T)=\Sing(T/P'T)\}.\]
This means that, for some generating set for $\frakp$, $\{q_1,\ldots,q_\ell\}\subseteq T$, we have $\{q_1,\ldots,q_\ell\} \subseteq S_{i+1}$. Thus,
\[\frakp\cap S_{i+1}=(q_1,\ldots,q_\ell)T\cap S_{i+1}=(q_1,\ldots,q_\ell)S_{i+1}.\]
Since $S\supseteq S_{i+1}$, $\frakp\cap S\supseteq \frakp\cap S_{i+1}=(q_1,\ldots,q_\ell)S_{i+1}$. Recall that $\frakp\cap S=P$, so
\[PT=(\frakp\cap S)T\supseteq((q_1,\ldots,q_\ell)S_{i+1})T=(q_1,\ldots,q_\ell)T=\frakp.\]
We already know that $\frakp\supseteq PT$; thus $\frakp=PT$. Then, $(T/PT)_{\frakp}=(T/PT)_{PT}$ is a field, and therefore a regular local ring. However, $\frakp/PT\in\Sing (T/PT)$, so $(T/PT)_{\frakp}$ is not a regular local ring, which is a contradiction. It follows that $Q/PT\notin\Sing(T/PT)$, so $(T/PT)_Q$ is a regular local ring. Thus, by Lemma~\ref{lem:excellent_sufficient_criteria}, we have that $S$ is excellent.
\end{proof}

We are now ready to prove our main result.

\begin{theorem}\label{thm:countable_excellent_dimgeq1_char}
Let $(T,M)$ be a complete local ring with $\Q\subseteq T$. Then $T$ is the completion of a countable excellent local domain if and only if the following conditions hold:
\begin{enumerate}
    \item $T$ is equidimensional,
    \item $T$ is reduced, and
    \item $T/M$ is countable.
\end{enumerate}
\end{theorem}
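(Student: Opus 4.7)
The plan is to separate the two directions of the equivalence, using a dimension-based case split for sufficiency to reduce the nontrivial content to the already-established Theorem~\ref{thm:countable_excellent_domain_existence}.

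For necessity, I would suppose $T=\widehat{S}$ for some countable excellent local domain $(S,m_S)$ and check the three conditions in turn. Condition (3) is immediate, since the natural identification $T/M \cong S/m_S$ exhibits $T/M$ as a quotient of the countable ring $S$. Condition (2) uses that $S$ excellent implies the generic formal fiber $\widehat{S}\otimes_S \operatorname{Frac}(S)$ is regular, hence reduced: any nilpotent $x\in\widehat{S}$ maps to $0$ there, so $sx=0$ in $\widehat{S}$ for some $0\neq s\in S$, and faithful flatness of $\widehat{S}$ over $S$ makes such an $s$ a non-zero-divisor in $\widehat{S}$, forcing $x=0$. Condition (1) follows because $S$ excellent is universally catenary, and a universally catenary Noetherian local domain has equidimensional completion (the domain case of Ratliff's theorem, which may also be viewed as the converse direction of the style of statement appearing as Theorem~\ref{thm:equi_implies_univcat}).

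For sufficiency, I would split on $\dim T$. When $\dim T \geq 1$, Theorem~\ref{thm:countable_excellent_domain_existence} applies directly to produce the desired countable excellent local domain $S \subseteq T$ with $\widehat{S}=T$, so nothing further is needed. When $\dim T = 0$, the hypothesis that $T$ is reduced, local, and Noetherian of dimension zero forces $T$ to be a field; condition (3) then says $T=T/M$ is countable, and any countable field is itself a countable excellent local domain with completion equal to itself, so one may take $S=T$.

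The main obstacle in this argument is Theorem~\ref{thm:countable_excellent_domain_existence} itself, whose proof the paper has already carried out through the $BR_0$-subring construction in Lemmas~\ref{lem:Q_J_countable} through \ref{lem:construct_precompletion_from_pbsubring}. Beyond citing that result, the proof of Theorem~\ref{thm:countable_excellent_dimgeq1_char} only assembles three standard facts about excellent rings for the necessity direction and disposes of the trivial zero-dimensional case.
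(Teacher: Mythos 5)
Your proposal is correct and follows essentially the same route as the paper: necessity via standard consequences of excellence (the paper cites Matsumura's Theorems 31.7 and 32.2 where you argue the reducedness directly from regularity of the generic formal fiber, and it gets countability of $T/M$ from Theorem~\ref{prop:small19_countable_domain} rather than from $T/M\cong S/m_S$, but these are the same facts), and sufficiency via the same case split on $\dim T$, with the zero-dimensional case forcing $T$ to be a countable field and the positive-dimensional case handled by Theorem~\ref{thm:countable_excellent_domain_existence}. No gaps.
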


\begin{proof}
First suppose $(T,M)$ is a complete local ring with $\dim T=0$ and $\Q\subseteq T$. Further suppose that $T$ is reduced, equidimensional, and $T/M$ is countable. Notice that $\bigcap_{P\in\Spec(T)} P=\sqrt{(0)}=(0)$, since the nilradical of a reduced ring is the $(0)$ ideal. But there is only one prime ideal in a local ring of dimension $0$, namely the maximal ideal. Thus, $M=(0)$ and $T$ is a field; then, $T/M=T/(0)=T$ is countable. In addition, $T$ itself is excellent, as it is a complete local ring. Thus, we have that $T$ is a countable excellent local domain, with completion itself.

Now suppose $(T,M)$ is a complete local ring with $\dim T\geq 1$ and $\Q\subseteq T$. In addition, suppose that $T$ is reduced, equidimensional, and $T/M$ is countable. By Theorem~\ref{thm:countable_excellent_domain_existence}, there exists a countable excellent local domain $S$ such that $\widehat{S}=T$. 

Suppose now that $T$ is the completion of a countable excellent local domain $A$ and $\dim T\geq 0$. By Theorem~\ref{prop:small19_countable_domain}, $T/M$ is countable. Notice that $A$ is universally catenary, since it is excellent. By \cite[Theorem 31.7]{matsumura}, this means that $A$ is formally catenary, i.e., $A/P$ is formally equidimensional for every $P\in\Spec(A)$. Since $A$ is a domain, we can consider $A/(0)\cong A$. Note that $A$ is formally equidimensional, so we have that $\widehat{A}=T$ is equidimensional. Finally, since $A$ is a domain, it is reduced. By \cite[Theorem 32.2]{matsumura}, its completion, $T$, is also reduced.
\end{proof}

As a corollary, we characterize complete local UFDs containing the rationals that are the completion of a countable excellent local UFD.

\begin{corollary}\label{cor:countable_excellent_UFD_char}
Let $(T,M)$ be a complete local UFD with $\Q\subseteq T$. Then $T$ is the completion of a countable excellent local UFD if and only if $T/M$ is countable.
\end{corollary}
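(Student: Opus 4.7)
The plan is to reduce the corollary to Theorem~\ref{thm:countable_excellent_dimgeq1_char} and then invoke a classical descent-of-factoriality theorem to upgrade the resulting excellent local domain to an excellent local UFD. The forward direction is immediate: any countable excellent local UFD is in particular a countable excellent local domain, so the forward direction of Theorem~\ref{thm:countable_excellent_dimgeq1_char} (equivalently, Theorem~\ref{prop:small19_countable_domain}) yields that $T/M$ is countable.

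For the reverse direction, suppose $(T,M)$ is a complete local UFD with $\Q\subseteq T$ and $T/M$ countable. Since $T$ is a UFD it is in particular a domain, hence reduced, and because $\Min(T)=\{(0)\}$ it is trivially equidimensional. The three conditions of Theorem~\ref{thm:countable_excellent_dimgeq1_char} are therefore satisfied, so that theorem produces a countable excellent local domain $(S,S\cap M)$ with $S\subseteq T$ and $\widehat{S}=T$. (The degenerate case $\dim T=0$ collapses to $T$ being a countable field, which is its own completion and a UFD, so there is nothing further to do.)

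It remains to show that $S$ itself is a UFD. For this I would invoke the classical fact that if $R$ is a Noetherian local domain whose completion $\widehat{R}$ is a UFD, then $R$ is a UFD. The standard proof uses that the faithfully flat extension $R\hookrightarrow \widehat{R}$ between Noetherian Krull domains induces an injection of divisor class groups $\mathrm{Cl}(R)\hookrightarrow \mathrm{Cl}(\widehat{R})$, so $\mathrm{Cl}(\widehat{R})=0$ forces $\mathrm{Cl}(R)=0$. Applied with $R=S$, this makes $S$ a countable excellent local UFD with $\widehat{S}=T$, completing the proof. The main obstacle is that this descent theorem is external to the paper and must be cited from the literature (for example, Matsumura's \emph{Commutative Ring Theory} or Fossum's monograph on Krull domains), but it is an entirely standard result and the rest of the argument is a direct assembly of what has already been proven.
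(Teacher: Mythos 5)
Your proposal is correct and follows essentially the same route as the paper: apply Theorem~\ref{thm:countable_excellent_dimgeq1_char} to obtain a countable excellent local domain with completion $T$, then descend factoriality from the completion. The paper simply asserts ``Since $T$ is a UFD, $A$ must be as well,'' whereas you supply the standard justification (Mori's theorem via the injection of divisor class groups along the faithfully flat map $S\hookrightarrow\widehat{S}$), which is a reasonable citation to the literature rather than a gap.
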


\begin{proof}
Suppose $(T,M)$ is a complete local UFD with $\Q\subseteq T$ and suppose that $T/M$ is countable. Since $T$ is a domain, it is reduced and equidimensional. Then, by Theorem~\ref{thm:countable_excellent_dimgeq1_char}, $T$ is the completion of a countable excellent local domain, $(A,A\cap M)$. Since $T$ is a UFD, $A$ must be as well. Thus, $T$ is the completion of a countable excellent local UFD.

Now suppose $(T,M)$ is the completion of a countable excellent local UFD $(A,A\cap M)$. Then, $T$ is the completion of a countable local domain, so by Theorem~\ref{prop:small19_countable_domain}, $T/M$ is countable.
\end{proof}

\begin{example}
For some complete local rings, it is not difficult to find an example of a countable local excellent domain whose completion is the given complete local ring.  For example, the complete local ring $\Q[[x_1,\ldots,x_n]]$ is the completion of $\Q[x_1,\ldots,x_n]_{(x_1 \ldots ,x_n)}$. For other complete local rings, however, Theorem~\ref{thm:countable_excellent_dimgeq1_char} is more useful.  For example, perhaps surprisingly, we have by Theorem~\ref{thm:countable_excellent_dimgeq1_char} that the complete  local ring $\Q[[x,y,z]]/(xy)$ is the completion of a countable excellent local domain.
\end{example}

\section{Completions of Countable Noncatenary Local Domains and UFDs}\label{sec:noncat}

In this section, we give necessary and sufficient conditions for a complete local ring to be the completion of a countable noncatenary local domain, and neceesary and sufficient conditions for a complete local ring to be the completion of a countable noncatenary local UFD. To do this, we use some of the techniques developed in the previous section, especially those concerning $BR_0$-subrings.

We first focus on characterizing completions of countable noncatenary local domains. The following result from \cite{SMALL17} characterizes completions of noncatenary local domains.

\begin{theorem}[\cite{SMALL17}, Theorem 2.10]\label{thm:small17noncatdoman_char}
Let $(T,M)$ be a complete local ring. Then $T$ is the completion of a noncatenary local domain if and only if the following conditions hold:
\begin{enumerate}
    \item no integer of $T$ is a zero divisor,
    \item $M\notin\Ass(T)$, and
    \item there exists $P\in\Min(T)$ such that $1<\dim(T/P)<\dim T$.
\end{enumerate}
\end{theorem}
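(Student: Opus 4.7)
The plan is to prove each direction separately, following the general philosophy of Heitmann-style constructions already used elsewhere in the paper, but without the cardinality restriction needed in the later sections.

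For necessity, assume $A$ is a noncatenary local domain with $\widehat{A}=T$. Conditions (1) and (2) are immediate from Theorem~\ref{thm:lech_char_domain}. For (3), I would use Ratliff's theorem: a Noetherian local domain whose completion is equidimensional is universally catenary, hence catenary. Since $A$ is noncatenary, $T$ must fail to be equidimensional, producing some $P\in\Min(T)$ with $\dim(T/P)<\dim T$. That $\dim(T/P)\geq 1$ follows from (2), since $\dim(T/P)=0$ would force $P=M\in\Min(T)\subseteq\Ass(T)$. To upgrade to $\dim(T/P)>1$, I would argue by contrapositive: if every non-full-dimensional minimal prime of $T$ has quotient dimension exactly $1$, then for any $\mathfrak{p}\in\Spec(A)$ one can compare $\dim(A/\mathfrak{p})=\dim(T/\mathfrak{p}T)$ with heights of primes of $T$ lying over $\mathfrak{p}$ using going-down, and deduce that $\height(\mathfrak{p})+\dim(A/\mathfrak{p})=\dim A$ always, contradicting noncatenarity.

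For sufficiency, assume $T$ satisfies (1)--(3). By Theorem~\ref{thm:lech_char_domain} there exists a local domain $A_0\subseteq T$ with $\widehat{A_0}=T$. Pick the prescribed $P_0\in\Min(T)$ with $1<\dim(T/P_0)<\dim T$, and choose $Q\in\Spec(T)$ with $P_0\subsetneq Q\subsetneq M$, $\dim(T/Q)=\dim(T/P_0)-1$, and such that $Q$ contains no minimal prime other than $P_0$ (possible by prime avoidance, since $Q\nsubseteq\bigcup_{P\in\Min(T),P\neq P_0}P$ whenever $Q$ is not contained in such a union, which can be arranged by choosing $Q$ to be a height-$1$ prime of $T/P_0$ that avoids the images of the other minimal primes). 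Fix any $x\in Q$ with $x\notin P$ for all $P\in\Min(T)\setminus\{P_0\}$; such $x$ exists by prime avoidance. I would then build $A\supseteq A_0[x]$ by adjoining elements in a controlled way, each transcendental modulo the maximal associated primes (as in Lemma~\ref{lem:pb_subring_adjoin}), while iteratively closing up ideals in the style of Lemma~\ref{lem:construct_precompletion_from_pbsubring}, so that the resulting $A$ is a local domain with $\widehat{A}=T$ and $\mathfrak{q}:=Q\cap A$ has $\height(\mathfrak{q})=1$ and $\dim(A/\mathfrak{q})=\dim(T/Q)$. Once this is achieved,
\[
\height(\mathfrak{q})+\dim(A/\mathfrak{q})=1+(\dim(T/P_0)-1)=\dim(T/P_0)<\dim T=\dim A,
\]
so $A$ is noncatenary.

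The main obstacle is the simultaneous control of $\height(\mathfrak{q})$ and $\dim(A/\mathfrak{q})$. Making $\mathfrak{q}$ nonzero is easy once $x\in A$, and $\height(\mathfrak{q})\leq\height(Q)$ by going-down, but pinning $\height(\mathfrak{q})$ to exactly $1$ requires ruling out unwanted primes slipping between $(0)$ and $\mathfrak{q}$ during the Heitmann closure; conversely, ensuring $\dim(A/\mathfrak{q})=\dim(T/Q)$ requires lifting a saturated chain of primes from $Q$ to $M$ in $T$ down to $A$. Both constraints must persist through the countable closure process while keeping $A\to T/M^2$ surjective and $IT\cap A=IA$ for finitely generated $I$, so that Proposition~\ref{prop:complete_machine} still applies. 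Balancing the transcendence-adjoining arguments that preserve these two height/dimension invariants against the closure steps that force Noetherianity is the delicate part of the argument.
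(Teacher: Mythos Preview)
The paper does not prove this theorem; it is quoted verbatim from \cite{SMALL17} and used as a black box. So there is no ``paper's own proof'' to compare against directly. That said, the paper \emph{does} prove the countable analogue (Proposition~\ref{prop:noncat_domain_suff}), and the technique there---which is essentially the technique of \cite{SMALL17}---differs from yours in a way that exposes a genuine gap in your sufficiency argument.

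\textbf{The gap in your sufficiency direction.} You choose $Q$ of height~$1$ in $T$ (sitting just above $P_0$) and adjoin a \emph{single} element $x\in Q$. You then want $\mathfrak q=Q\cap A$ to satisfy $\height(\mathfrak q)=1$ and $\dim(A/\mathfrak q)=\dim(T/Q)$. The first equality is fine: $\height(\mathfrak q)\le\height(Q)=1$ by going-down, and $x\in\mathfrak q$ is nonzero (it must avoid every associated prime, including $P_0$, for $A$ to remain a domain). The second equality is the problem. Since $\widehat{A/\mathfrak q}=T/\mathfrak qT$, you need $\mathfrak qT=Q$, and a single element $x$ gives no such control: the minimal primes of $T$ over $\mathfrak qT$ need only contain $x$, and nothing prevents one of them from lying over a full-dimensional minimal prime of $T$, forcing $\dim(A/\mathfrak q)=\dim(T/\mathfrak qT)=\dim T-1$ and killing the noncatenarity. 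Your own diagnosis (``lifting a saturated chain from $Q$ to $M$ down to $A$'') is aimed at the wrong inequality: lifting chains would bound $\dim(A/\mathfrak q)$ from below, whereas what you need is an \emph{upper} bound, and that comes only from knowing $\mathfrak qT=Q$.

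\textbf{How the paper (and \cite{SMALL17}) sidestep this.} Via Lemma~\ref{lem:prime_existence_codim1} one instead chooses $Q$ at the \emph{top} of the chain, with $\dim(T/Q)=1$ and $\height(Q)+1<\dim T$, and then uses Lemma~\ref{lem:adjoin_gen_set} to adjoin a \emph{full generating set} for $Q$. This forces $(Q\cap A)T=Q$, whence $\dim(A/(Q\cap A))=\dim(T/Q)=1$ automatically; the only height information needed is the going-down inequality $\height(Q\cap A)\le\height(Q)$, with no lower bound required. All of the ``balancing'' you describe as the delicate part simply disappears.

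\textbf{Necessity.} Your outline is correct in spirit. The reduction to ``some minimal prime has $\dim(T/P)<\dim T$'' via Ratliff/Theorem~\ref{thm:equi_implies_univcat} is right, as is $\dim(T/P)\ge 1$. Your sketch for upgrading to $\dim(T/P)>1$ is where the real content lies and is too vague as written; the argument in \cite{SMALL17} shows directly that if every minimal prime of $T$ has coheight either $\dim T$ or~$1$, then $A$ is catenary, and filling this in carefully is nontrivial.
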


Using this result, along with Theorem~\ref{prop:small19_countable_domain}, we show that a complete local ring is the completion of a countable noncatenary local domain if and only if it is the completion of a noncatenary local domain and the completion of a countable local domain. In order to achieve this result, we first identify sufficient conditions needed by constructing a countable noncatenary local domain. We first provide some lemmas that will help us show that the rings we construct are noncatenary. 

\begin{lemma}[\cite{SMALL17}, Lemma 2.8]\label{lem:small17_saturated_chains}
Let $(T,M)$ be a local ring with $M\notin\Ass(T)$ and let $P\in\Min(T)$ with $\dim(T/P)=n$. Then there exists a saturated chain of prime ideals of $T$, $P\subsetneq Q_1\subsetneq \cdots\subsetneq Q_{n-1}\subsetneq M$, such that, for each $i=1,\ldots,n-1$, we have that $Q_i\notin\Ass(T)$ and $P$ is the only minimal prime ideal contained in $Q_i$.
\end{lemma}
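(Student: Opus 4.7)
The plan is to construct the intermediate primes $Q_1,\dots,Q_{n-1}$ one at a time, iteratively combining prime avoidance with Krull's Hauptidealsatz. I would induct on $n$; the case $n=1$ is vacuous. For the inductive step, set $Q_0=P$ and suppose primes $P=Q_0\subsetneq Q_1\subsetneq\cdots\subsetneq Q_{i-1}$ have been built saturated, with $\dim(T/Q_{i-1})=n-(i-1)$, each $Q_k$ for $k\geq 1$ outside $\Ass(T)$, and $P$ the only minimal prime of $T$ contained in $Q_{i-1}$. The last condition ensures that when I pass to the domain $T/Q_{i-1}$, which has dimension $n-(i-1)$, the images of all the other minimal primes $P_j$ of $T$ are nonzero.

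To construct $Q_i$, I first list the finitely many height-$1$ bad primes of $T/Q_{i-1}$ to avoid: (i) $\frakp/Q_{i-1}$ for each $\frakp\in\Ass(T)$ with $\frakp\supsetneq Q_{i-1}$ and $\height(\frakp/Q_{i-1})=1$; and (ii) for every other minimal prime $P_j$ of $T$, the height-$1$ minimal primes of the nonzero ideal $(P_j+Q_{i-1})/Q_{i-1}$. Since $\dim(T/Q_{i-1})=n-(i-1)\geq 2$, the maximal ideal $M/Q_{i-1}$ has height at least $2$, so it is contained in none of these finitely many height-$1$ primes, nor in $(0)$. Standard prime avoidance then produces a nonzero $\overline{x}\in M/Q_{i-1}$ avoiding every bad prime. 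I let $Q_i\in\Spec(T)$ be a minimal prime of $(Q_{i-1},x)$ chosen to satisfy $\dim(T/Q_i)=n-i$; such a choice exists because $\overline{x}\neq 0$ in the domain $T/Q_{i-1}$ forces $\dim((T/Q_{i-1})/(\overline{x}))=n-i$, a value that must be attained by at least one minimal prime over $(\overline{x})$.

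Krull's Hauptidealsatz gives $\height(Q_i/Q_{i-1})=1$, keeping the chain saturated. Since $Q_i/Q_{i-1}$ contains $\overline{x}$, it cannot equal any type-(i) bad prime (hence $Q_i\notin\Ass(T)$), nor any type-(ii) bad prime (hence no $P_j\ne P$ is contained in $Q_i$, so $P$ remains the unique minimal prime of $T$ inside $Q_i$). Iterating until $i=n-1$ produces a saturated chain ending at $Q_{n-1}$ with $\dim(T/Q_{n-1})=1$, so the top step $Q_{n-1}\subsetneq M$ is automatically saturated. The main obstacle is controlling the coheight $\dim(T/Q_i)$ throughout the construction, which could collapse in a noncatenary ring; I address this by preserving the inductive condition that $P$ is the unique minimal prime contained in each $Q_{i-1}$, which guarantees $T/Q_{i-1}$ is a domain whose dimension drops by exactly one upon quotienting by the carefully chosen $\overline{x}$, thereby providing a minimal prime of the expected coheight.
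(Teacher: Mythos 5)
Your argument is correct. The paper itself gives no proof of this statement---it is quoted directly from \cite{SMALL17}, Lemma 2.8---but your induction is a valid, self-contained argument in the same spirit as the original: maintaining that $P$ is the unique minimal prime of $T$ inside $Q_{i-1}$ keeps the ideals $(P_j+Q_{i-1})/Q_{i-1}$ nonzero, prime avoidance over the finitely many height-one bad primes of the domain $T/Q_{i-1}$ produces the element $\overline{x}$, and Krull's principal ideal theorem together with $\dim\bigl((T/Q_{i-1})/(\overline{x})\bigr)=\dim(T/Q_{i-1})-1$ yields a minimal prime $Q_i$ of $(Q_{i-1},x)$ of the correct coheight that is neither associated nor contains any $P_j\neq P$. (One small imprecision in your closing summary: $T/Q_{i-1}$ is a domain simply because $Q_{i-1}$ is prime, and the dimension drop by exactly one follows from $\overline{x}\neq 0$ in that domain; the unique-minimal-prime hypothesis is what you need for the type-(ii) avoidance, not for the domain property.)
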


We now use this result to prove the following lemma, which shows that under certain conditions, there exists a prime ideal with desirable properties.

\begin{lemma}\label{lem:prime_existence_codim1}
Let $(T,M)$ be a catenary local ring with $M\notin\Ass(T)$. If there exists $P\in\Min(T)$ such that $1<\dim(T/P)<\dim T$, then there exists $Q\in\Spec(T)$ such that $\dim(T/Q)=1$, $\height(Q)+\dim(T/Q)<\dim T$, and $Q\nsubseteq\frakp$ for all $\frakp\in\Ass(T)$.
\end{lemma}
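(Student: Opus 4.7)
I would apply Lemma~\ref{lem:small17_saturated_chains} to the minimal prime $P$.  Writing $n = \dim(T/P)$, this produces a saturated chain
\[
P \subsetneq Q_1 \subsetneq \cdots \subsetneq Q_{n-1} \subsetneq M
\]
in which every $Q_i \notin \Ass(T)$ and $P$ is the only minimal prime of $T$ contained in each $Q_i$.  My candidate is $Q \coloneqq Q_{n-1}$.  Since the single step $Q \subsetneq M$ is saturated, $\dim(T/Q) = 1$, giving the first conclusion.

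Next I would compute $\height(Q)$.  Any saturated chain of primes ending at $Q$ begins at some minimal prime of $T$ contained in $Q$; by Lemma~\ref{lem:small17_saturated_chains} the only such minimal prime is $P$, so every saturated chain ending at $Q$ in fact begins at $P$.  Because $T$ is catenary, all saturated chains from $P$ to $Q$ have the same length as the chain $P \subsetneq Q_1 \subsetneq \cdots \subsetneq Q_{n-1} = Q$, namely $n - 1$.  Hence $\height(Q) = n - 1$, and so
\[
\height(Q) + \dim(T/Q) = (n-1) + 1 = n = \dim(T/P) < \dim T,
\]
which gives the second conclusion.

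It remains to verify that $Q \nsubseteq \frakp$ for every $\frakp \in \Ass(T)$.  If $\frakp \in \Min(T)$ and $Q \subseteq \frakp$, then the inclusions $P \subseteq Q \subseteq \frakp$ together with minimality of both $P$ and $\frakp$ force $P = \frakp$; but then $Q \subseteq P$, contradicting $P \subsetneq Q$.  If instead $\frakp$ is an embedded associated prime and $Q \subseteq \frakp$, then $Q \notin \Ass(T)$ forces $Q \subsetneq \frakp \subseteq M$; since $\dim(T/Q) = 1$, the only prime strictly above $Q$ is $M$, so $\frakp = M$, contradicting the hypothesis $M \notin \Ass(T)$.

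The main subtlety I anticipate is the height computation in the presence of multiple minimal primes of $T$: one must exploit both the uniqueness of $P$ among minimal primes below $Q$ supplied by Lemma~\ref{lem:small17_saturated_chains} and the catenary hypothesis on $T$ to pin $\height(Q)$ down to exactly $n-1$ rather than merely bound it from below.
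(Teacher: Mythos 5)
Your proposal is correct and follows essentially the same route as the paper: apply Lemma~\ref{lem:small17_saturated_chains} to $P$, take $Q = Q_{n-1}$, use catenarity together with the uniqueness of the minimal prime below $Q$ to get $\height(Q)+\dim(T/Q)=n<\dim T$, and rule out $Q\subseteq\frakp$ using $Q, M\notin\Ass(T)$ and $\dim(T/Q)=1$. Your height computation is simply a more detailed version of the step the paper asserts directly.
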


\begin{proof}
Suppose $P\in\Min(T)$ and $1<\dim(T/P)=n<\dim T$. By Lemma~\ref{lem:small17_saturated_chains}, there exists a saturated chain of prime ideals of $T$, $P\subsetneq Q_1\subsetneq\cdots\subsetneq Q_{n-1}\subsetneq M$, such that, for each $i=1,\ldots,n-1$, we have that $Q_i\notin\Ass(T)$ and $P$ is the only minimal prime ideal of $T$ contained in $Q_i$. Since $T$ is catenary 
and this chain is saturated, we have that $\height(Q_i)+\dim(T/Q_i)=n<\dim(T)$ for all $i=1,\ldots,n-1$. In particular, if we consider $Q_{n-1}$, we have that $\dim(T/Q_{n-1})=1$ and $\height(Q_{n-1})+\dim(T/Q_{n-1})<\dim T$. In addition, we have that $M,Q_{n-1}\notin\Ass(T)$, so $Q_{n-1}\nsubseteq\frakp$ for all $\frakp\in\Ass(T)$ as well.
\end{proof}

We now identify sufficient conditions for a complete local ring to be the completion of a countable noncatenary local domain.

\begin{proposition}\label{prop:noncat_domain_suff}
Let $(T,M)$ be a complete local ring satisfying the following properties:
\begin{enumerate}
    \item no integer of $T$ is a zero divisor,
    \item $M\notin\Ass(T)$,
    \item there exists $P\in\Min(T)$ such that $1<\dim(T/P)<\dim T$, and
    \item $T/M$ is countable.
\end{enumerate}
Then $T$ is the completion of a countable noncatenary local domain.
\end{proposition}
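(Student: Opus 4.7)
The plan is to use the $BR_0$-subring machinery from Section~\ref{sec:countable_excellent} to construct a countable local domain $A$ inside $T$ with $\widehat{A}=T$ that contains a generating set for a carefully chosen prime $Q$ of $T$, and then to read off noncatenarity from $A$'s chain structure. First, conditions (1), (2), and (4) match the hypotheses of Theorem~\ref{prop:small19_countable_domain}, so $T$ is the completion of some countable local domain $(R_0,R_0\cap M)$. Next, since $T$ is complete, Cohen's structure theorem realizes it as a quotient of a regular local ring, so $T$ is universally catenary, and in particular catenary. Combined with $M\notin\Ass(T)$ and (3), Lemma~\ref{lem:prime_existence_codim1} provides a prime $Q\in\Spec(T)$ with $\dim(T/Q)=1$, $\height(Q)+1<\dim T$, and $Q\nsubseteq\frakp$ for every $\frakp\in\Ass(T)$.

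Regarding $R_0$ as a $BR_0$-subring of itself, I would apply Lemma~\ref{lem:adjoin_gen_set} (applicable because $Q$ avoids every associated prime) to produce a $BR_0$-subring $R$ of $T$ containing a generating set $\{q_1,\ldots,q_n\}$ for $Q$, and then apply Lemma~\ref{lem:construct_precompletion_from_pbsubring} to $R$ to obtain a $BR_0$-subring $A$ with $R\subseteq A$, $A$ Noetherian, and $\widehat{A}=T$. Then $A$ is countable, local, and actually a domain: every nonzero element of $A$ avoids every associated prime of $T$ (by the $BR_0$ condition), so it is a nonzerodivisor of $T$, forbidding zero divisors in $A$.

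To see that $A$ is noncatenary, set $Q'=Q\cap A$. Since $A$ contains the generators $q_1,\ldots,q_n$ of $Q$, we have $Q'T\supseteq(q_1,\ldots,q_n)T=Q$, and hence $Q'T=Q$. Completing $A/Q'$ at its maximal ideal then yields
\[ T/Q \;=\; T/Q'T \;\cong\; \widehat{A/Q'}, \]
so $\dim(A/Q')=\dim(T/Q)=1$. Because $T$ is faithfully flat over $A$, going-down gives $\height(Q')\le\height(Q)$, and hence $\height(Q')+\dim(A/Q')\le\height(Q)+1<\dim T=\dim A$. Concatenating a saturated chain from $(0)$ to $Q'$ in $A$ with the single step $Q'\subsetneq A\cap M$ (saturated because $\dim(A/Q')=1$) produces a saturated chain from $(0)$ to $A\cap M$ of length strictly less than $\dim A$, while $A$ also admits a maximal, hence saturated, chain of length $\dim A$. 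These two chains witness that $A$ is not catenary.

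The step I expect to be the main obstacle is the verification that $\dim(A/Q')=1$: going-down gives the height inequality for free but says nothing directly about the quotient dimension. The identity $Q'T=Q$, engineered by deliberately adjoining a generating set for $Q$ via Lemma~\ref{lem:adjoin_gen_set} before closing up under finitely generated ideals, is what bridges this gap; once it is in hand, the rest is a short dimension count.
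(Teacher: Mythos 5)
Your proposal is correct and follows essentially the same route as the paper: obtain a countable local domain $R_0$ with $\widehat{R}_0=T$ from Theorem~\ref{prop:small19_countable_domain}, use Lemma~\ref{lem:prime_existence_codim1} to find the prime $Q$, adjoin a generating set for $Q$ via Lemma~\ref{lem:adjoin_gen_set}, close up with Lemma~\ref{lem:construct_precompletion_from_pbsubring}, and deduce noncatenarity from $Q'T=Q$, $\dim(A/Q')=\dim(T/Q'T)=1$, and going-down. Your direct appeal to $\widehat{A/Q'}\cong T/Q'T$ is a slightly cleaner packaging of the paper's argument (which checks that any prime properly containing $Q\cap A$ must be maximal), but the substance is identical.
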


\begin{proof}
Conditions (1), (2), and (4) imply that $T$ is the completion of a countable local domain, $(R_0,R_0\cap M),$ by Theorem~\ref{prop:small19_countable_domain}. Notice that $R_0$ is itself a $BR_0$-subring of $T$, and that $\dim T\geq 1$ by condition (3). In addition, $T$ is catenary since it is a complete local ring, and $T$ satisfies conditions (2) and (3); thus, there exists $Q\in\Spec(T)$ such that $\dim(T/Q)=1$, $\height(Q)+\dim(T/Q)<\dim T$, and $Q\nsubseteq\frakp$ for all $\frakp\in\Ass(T)$ by Lemma~\ref{lem:prime_existence_codim1}. Then, by Lemma~\ref{lem:adjoin_gen_set}, there exists a $BR_0$-subring of $T$, $(R,R \cap M)$, such that $R$ contains a generating set for $Q$. Since $\dim T\geq 1$, by Lemma~\ref{lem:construct_precompletion_from_pbsubring}, there exists a $BR_0$-subring of $T$, $(A,A\cap M)$, such that $R\subseteq A\subseteq T$, $A$ is Noetherian, and $\widehat{A}=T$. Since $A$ is a $BR_0$-subring of $T$, we have that $A$ is countable and a domain. It remains to be shown that $A$ is noncatenary.

Recall that $Q\in\Spec(T)$ such that $\dim(T/Q)=1$, $\height(Q)+\dim(T/Q)<\dim T$, and $A$ contains a generating set for $Q$. Since $A$ contains a generating set for $Q$, we have that $(Q\cap A)T=Q$. We show that $\dim(A/(Q\cap A))=1$ in order to show that $A$ is noncatenary. Suppose $P'$ is a prime ideal of $A$ such that $Q\cap A\subsetneq P'$. Then, $(Q\cap A)T=Q\subsetneq P'T$; otherwise, $Q=P'T$ would imply that $Q\cap A=P'T\cap A=P'$. Since $\dim(T/Q)=1$, we have that $\dim(T/P'T)=0$, which in turn implies that $\dim(A/P')=0$, since $\widehat{A/P'}=T/P'T$. Then, $P'$ is actually the maximal ideal of $A$, i.e., $P'=A\cap M$, and $\dim(A/(Q\cap A))=1$. A local ring and its completion satisfy the going-down theorem, so we have that $\height(Q\cap A)\leq\height(Q)$; then,
\[\height(Q\cap A)+1\leq\height(Q)+1<\dim T=\dim(A).\]
Thus, we have shown that $A$ is noncatenary.
\end{proof}

We now characterize completions of countable noncatenary local domains.

\begin{theorem}\label{thm:countable_noncat_domain_char}
Let $(T,M)$ be a complete local ring. Then $T$ is the completion of a countable noncatenary local domain if and only if the following conditions hold:
\begin{enumerate}
    \item no integer of $T$ is a zero divisor,
    \item $M\notin\Ass(T)$,
    \item there exists $P\in\Min(T)$ such that $1<\dim(T/P)<\dim T$, and
    \item $T/M$ is countable.
\end{enumerate}
\end{theorem}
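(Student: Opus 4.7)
The plan is to deduce this theorem from results already in hand, rather than construct anything new. The sufficiency direction $(\Leftarrow)$ is exactly Proposition \ref{prop:noncat_domain_suff}, so nothing new is needed there. For the necessity direction $(\Rightarrow)$, I will invoke Theorem \ref{prop:small19_countable_domain} and Theorem \ref{thm:small17noncatdoman_char} in tandem.

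More precisely, suppose $T = \widehat{A}$ where $(A,A\cap M)$ is a countable noncatenary local domain. First, since $A$ is a countable local domain with completion $T$, Theorem \ref{prop:small19_countable_domain} immediately yields condition (1) (no integer of $T$ is a zero divisor) and condition (4) ($T/M$ is countable). The same theorem tells us that $M\notin\Ass(T)$ \emph{unless} $M=(0)$; to rule out the exceptional case I would observe that any local ring of dimension $0$ or $1$ is automatically catenary (the only chains in $\Spec$ have length at most one), so the noncatenarity of $A$ forces $\dim A\geq 2$, and hence $\dim T=\dim A\geq 2$, so $M\neq (0)$. This gives condition (2).

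For condition (3), since $A$ is a noncatenary local domain with completion $T$, Theorem \ref{thm:small17noncatdoman_char} applied to $A$ gives directly that there exists $P\in\Min(T)$ with $1<\dim(T/P)<\dim T$.

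There is no real obstacle here, since the construction work has been done in Proposition \ref{prop:noncat_domain_suff} (for sufficiency) and in the earlier papers cited for necessity; the only subtlety to be careful about is the $M=(0)$ edge case in Theorem \ref{prop:small19_countable_domain}, which is eliminated by the dimension lower bound coming from noncatenarity. Thus the proof will consist of a short sufficiency step citing Proposition \ref{prop:noncat_domain_suff} and a short necessity step citing Theorems \ref{prop:small19_countable_domain} and \ref{thm:small17noncatdoman_char} together with the observation that noncatenary rings have dimension at least $2$.
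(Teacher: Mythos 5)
Your proposal is correct and follows essentially the same route as the paper: sufficiency via Proposition~\ref{prop:noncat_domain_suff}, necessity via Theorems~\ref{thm:small17noncatdoman_char} and~\ref{prop:small19_countable_domain}. The only difference is that the paper reads condition (2) directly off Theorem~\ref{thm:small17noncatdoman_char}, whose statement has no $M=(0)$ caveat, so your (correct) edge-case argument that noncatenarity forces $\dim T\geq 2$ is not actually needed.
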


\begin{proof}
If $T$ satisfies conditions (1), (2), (3), and (4), then by Proposition~\ref{prop:noncat_domain_suff}, $T$ is the completion of a countable noncatenary local domain. If $T$ is the completion of a countable noncatenary local domain, then by Theorem~\ref{thm:small17noncatdoman_char}, $T$ must satisfy conditions (1), (2), and (3). By Theorem~\ref{prop:small19_countable_domain}, $T$ must satisfy condition (4) as well.
\end{proof}

Next, we characterize completions of countable noncatenary local UFDs. The following is a result from \cite{SMALL17} that characterizes completions of noncatenary local UFDs.

\begin{theorem}[\cite{SMALL17}, Theorem 3.7]\label{thm:small17noncatufd_char}
Let $(T,M)$ be a complete local ring. Then $T$ is the completion of a noncatenary local UFD if and only if the following conditions hold:
\begin{enumerate}
    \item no integer of $T$ is a zero divisor,
    \item $\depth(T)>1$, and
    \item there exists $P\in\Min(T)$ such that $2<\dim(T/P)<\dim T$.
\end{enumerate}
\end{theorem}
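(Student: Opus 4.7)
The plan is to prove the biconditional in two directions: necessity follows from general properties of UFDs combined with faithful flatness of completion, while sufficiency requires constructing a noncatenary local UFD $A\subseteq T$ with $\widehat{A}=T$, adapting the $BR_0$-subring strategy of Proposition~\ref{prop:noncat_domain_suff} and invoking a Heitmann-style characterization of UFD completions.

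\textbf{Necessity.} Suppose $T=\widehat{A}$ for a noncatenary local UFD $A$. Condition~(1) is immediate: $A$ is a domain and the completion map $A\to T$ is faithfully flat, so no nonzero integer becomes a zero divisor. For condition~(2), I would argue that $A$ being a UFD implies $A$ is normal (Gauss's lemma), so $A$ satisfies Serre's $(S_2)$ condition; any Noetherian local ring of dimension at most~$2$ is catenary, so noncatenariness forces $\dim A\geq 3$, and then $(S_2)$ at the maximal ideal gives $\depth A\geq 2$, which transfers to $\depth T\geq 2$ because depth is preserved under completion. For condition~(3), noncatenariness rules out $T$ being equidimensional by Theorem~\ref{thm:equi_implies_univcat}, so some $P\in\Min(T)$ satisfies $\dim(T/P)<\dim T$. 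The inequality $\dim(T/P)\geq 2$ for every $P\in\Min(T)\subseteq\Ass(T)$ follows from $\depth T\leq\dim(T/P)$, and the strict inequality $\dim(T/P)>2$ comes from a finer examination: because every height-one prime of $A$ is principal, a short maximal chain in $\Spec(A)$ witnessing noncatenariness must have its defect at height $\geq 2$, which in turn forces the corresponding minimal prime in $T$ to have codimension strictly greater than~$2$.

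\textbf{Sufficiency.} Suppose $T$ satisfies (1)--(3). Mirroring Proposition~\ref{prop:noncat_domain_suff}, I would use condition~(3) together with a UFD-adapted variant of Lemma~\ref{lem:prime_existence_codim1} to locate a prime $Q\in\Spec(T)$ with $\dim(T/Q)=2$, $\height(Q)+\dim(T/Q)<\dim T$, and $Q\nsubseteq\frakp$ for all $\frakp\in\Ass(T)$. I would then build an ascending chain of local subrings of $T$ by alternating two kinds of steps: (a) adjoining generators of $Q$ so that, in the final ring $A$, $(Q\cap A)T=Q$ and the chain $(0)\subsetneq Q\cap A\subsetneq\cdots\subsetneq A\cap M$ is strictly shorter than $\dim A=\dim T$, witnessing noncatenariness; and (b) adjoining factorizations that force every height-one prime of the intermediate ring to be principal, ensuring the UFD property. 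The completion of the union is shown to be $T$ via Corollary~\ref{cor:basering_complete_macine}; compared with the countable case treated in this paper, one drops the countability bookkeeping and instead uses Heitmann's existence result for UFD completions (which requires precisely conditions~(1) and~(2)) to guarantee that the UFD-forcing step terminates appropriately.

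\textbf{Main obstacle.} The hard part is sufficiency, specifically the interleaving of the UFD-forcing procedure (which needs $\depth T>1$ to supply enough non-zero-divisors for factorization) with the noncatenary-forcing step (which needs a prime $Q$ of codimension at least~$2$ as above). The sharper hypothesis $2<\dim(T/P)$, as opposed to $1<\dim(T/P)$ in the noncatenary domain case, is precisely what gives enough codimensional room so that the short chain through $Q$ and the height-one principal primes of $A$ can coexist without collapsing, while still allowing the completion to remain equal to $T$. Verifying compatibility of the UFD-closure step with the completion-forcing machinery of Lemma~\ref{lem:construct_precompletion_from_pbsubring} is the most delicate technical point.
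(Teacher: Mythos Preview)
The paper does not prove this theorem; it is quoted verbatim as Theorem~3.7 of \cite{SMALL17} and stated without proof. The only related argument in the paper is a one-paragraph summary (between Theorem~\ref{thm:heitmann_UFD_suff} and Proposition~\ref{prop:noncat_ufd_suff}) of how \cite{SMALL17} establishes the sufficiency direction of Lemma~\ref{lem:small17_noncat_ufd_suff}: start with the prime subring of $T$, adjoin generators of a suitable prime ideal, and then run Heitmann's UFD construction (Theorem~\ref{thm:heitmann_UFD_suff}) with that ring as the base. The necessity direction is not discussed at all.

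Your sufficiency sketch is close in spirit to that summary but is phrased in the $BR_0$-subring language of Section~\ref{sec:countable_excellent}. Be aware that Definition~\ref{defn:br_0_subring} and Lemmas~\ref{lem:pb_subring_adjoin}--\ref{lem:construct_precompletion_from_pbsubring} are all stated with a \emph{countable} base domain $R_0$ and countable intermediate rings, whereas Theorem~\ref{thm:small17noncatufd_char} has no countability hypothesis; the $BR_0$ framework as written does not literally apply, and the actual \cite{SMALL17} argument bypasses it by working directly over the prime subring and invoking Heitmann's Theorem~\ref{thm:heitmann_UFD_suff} wholesale rather than interleaving UFD-forcing steps by hand. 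Your idea of passing to a prime $Q$ with $\dim(T/Q)=2$ (rather than $1$) is indeed the correct adaptation of Lemma~\ref{lem:prime_existence_codim1} used in \cite{SMALL17}.

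Your necessity sketch goes well beyond anything in this paper. The arguments for conditions~(1) and~(2) are fine. For condition~(3), the inequality $\dim(T/P)\geq\depth T\geq 2$ for every $P\in\Ass(T)$ is standard, but your justification for the \emph{strict} inequality $\dim(T/P)>2$ (``the defect must occur at height $\geq 2$ because height-one primes are principal'') is not yet a proof: one has to show that if every minimal prime of $T$ had $\dim(T/P)\in\{2,\dim T\}$, then $A$ would be catenary, and this uses more than principality of height-one primes---it requires controlling how chains in $A$ lift to $T$ via the formal fibers. That is the genuinely delicate point in the necessity direction of \cite{SMALL17}.
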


We first identify sufficient conditions for a complete local ring to be the completion of a countable noncatenary local UFD. We do this by studying the construction of noncatenary local UFDs given in \cite{SMALL17}. The following result in \cite{SMALL17} identifies sufficient conditions on a complete local ring for it to be the completion of a noncatenary local UFD.

\begin{lemma}[\cite{SMALL17}, Lemma 3.6]\label{lem:small17_noncat_ufd_suff}
Let $(T,M)$ be a complete local ring such that no integer of $T$ is a zero divisor. Suppose $\depth T>1$ and there exists $P\in\Min(T)$ such that $2<\dim(T/P)<\dim T$. Then $T$ is the completion of a noncatenary local UFD.
\end{lemma}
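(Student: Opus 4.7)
The plan is to construct a noncatenary local UFD $A\subseteq T$ with $\widehat{A}=T$, adapting the chain-construction used for noncatenary local domains (cf.\ Proposition~\ref{prop:noncat_domain_suff}) so as to additionally force the UFD property by Heitmann-style techniques for building local UFDs with prescribed completions.

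First, I would locate a prime of $T$ witnessing noncatenarity. Since $T$ is catenary (being complete local) and $\depth T>1$ forces $M\notin\Ass(T)$, I would establish a coheight-$2$ analogue of Lemma~\ref{lem:prime_existence_codim1}: starting from $P\in\Min(T)$ with $2<\dim(T/P)<\dim T$, produce $Q\in\Spec(T)$ with $P\subseteq Q$, $\dim(T/Q)=2$, $\height(Q)+\dim(T/Q)<\dim T$, and $Q\nsubseteq\frakp$ for every $\frakp\in\Ass(T)$. The reason for demanding coheight $2$, rather than $1$ as in the domain case, is that a coheight-$1$ prime $Q\cap A$ in a UFD $A$ of dimension $d$ would sit directly below the maximal ideal, and the Heitmann-style enforcing of principality of height-one primes obstructs realizing $\height(Q\cap A)<d-1$ with coheight $1$; working one step higher decouples the UFD property from the noncatenary witness.

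Next, I would construct $A$ as the union of an ascending chain of quasi-local subrings $R_0\subseteq R_1\subseteq\cdots$ of $T$, interleaving three kinds of steps: (i) adjoining generators of $Q$, in the style of Lemma~\ref{lem:adjoin_gen_set}, so that $(Q\cap A)T=Q$ in the limit; (ii) closing up with respect to every finitely generated ideal, in the style of Lemma~\ref{lem:construct_precompletion_from_pbsubring}, so that the limit is Noetherian with completion $T$; and (iii) the UFD-enforcing step, where, for each $t$ in a fixed enumeration of $T$, one adjoins an element of the form $t+\alpha u$ with $\alpha\in M$ and $u\in T$ lying outside every $\frakp\in\Ass(T)$ (and outside $Q$), chosen so that every height-one prime of the limit ring that meets $tA$ is forced to be principal. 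The hypothesis $\depth T>1$ supplies a regular sequence of length at least $2$ in $M$, which, combined with the strong prime-avoidance result (Lemma~\ref{lem:pippa_prime_avoidance}), gives enough flexibility to carry out (iii) without damaging (i) or (ii).

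Noncatenarity of $A$ then follows exactly as in Proposition~\ref{prop:noncat_domain_suff}: from $(Q\cap A)T=Q$ and $\widehat{A/(Q\cap A)}=T/Q$ we read off $\dim(A/(Q\cap A))=\dim(T/Q)=2$, while going-down gives $\height(Q\cap A)\leq\height(Q)$, so $\height(Q\cap A)+\dim(A/(Q\cap A))\leq\height(Q)+2<\dim T=\dim A$. The main obstacle is step (iii): one must simultaneously maintain the domain property, the prescribed completion, and the preservation of the prime $Q\cap A$, while also arranging that every height-one prime of the final ring is principal and that later adjunctions do not create new, non-principal height-one primes. Verifying that the three types of steps can be interleaved coherently, that the depth-$>1$ hypothesis is truly sufficient for the UFD-enforcing step to succeed in the limit, and that the bookkeeping closes up over countably many requirements, is the hardest technical core of the proof.
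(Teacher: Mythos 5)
Your proposal follows essentially the same route as the proof from \cite{SMALL17} that the paper sketches just before Proposition~\ref{prop:noncat_ufd_suff}: locate a prime $Q$ of coheight $2$ with $\height(Q)+\dim(T/Q)<\dim T$ that avoids the associated primes of $T$, adjoin a generating set for $Q$ to a small base ring, run Heitmann's construction from Theorem~\ref{thm:heitmann_UFD_suff} to get a local UFD $A$ with $\widehat{A}=T$ and $(Q\cap A)T=Q$, and deduce noncatenarity from $\dim(A/(Q\cap A))=2$ together with going-down. The differences are cosmetic (you interleave the generator-adjoining step with Heitmann's machine rather than performing it once at the outset, and your heuristic for why coheight $2$ is needed is looser than the actual N-subring bookkeeping), and, like the paper, you delegate the UFD-enforcing core to Heitmann's argument.
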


In the proof of this characterization, the authors of \cite{SMALL17} use the proof of the following result of Heitmann's from \cite{heitmannUFD}.

\begin{theorem}[\cite{heitmannUFD}, Theorem 8]\label{thm:heitmann_UFD_suff}
Let $(T,M)$ be a complete local ring such that no integer is a zero divisor in $T$ and $\depth T\geq 2$. Then there exists a local UFD $A$ such that $\widehat{A}\cong T$ and $\abs{A}=\sup(\aleph_0,\abs{T/M})$. If $p\in M$ where $p$ is a nonzero prime integer, then $pA$ is a prime ideal.
\end{theorem}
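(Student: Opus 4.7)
Since this is a cited result from Heitmann's paper, the natural plan is to follow the general element-by-element construction pattern used throughout this section, scaled up to an ordinal of length $\kappa \coloneqq \sup(\aleph_0, \abs{T/M})$. The aim is to build $A$ as a directed union of quasi-local subrings $A_\alpha \subseteq T$ while simultaneously controlling three things: (i) $\widehat{A} = T$, which we intend to verify at the end by Proposition~\ref{prop:complete_machine}; (ii) every height-one prime of the limit ring is principal (the classical criterion for a Noetherian local domain to be a UFD); and (iii) if a nonzero prime integer $p \in M$, then $p$ stays prime.

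The construction would start from a base ring $A_0$ that is either $\Z_{(p)}$ (if such a $p$ exists in $M$) or the prime ring of $T$ localized at $M \cap (\text{prime ring})$. I would then enumerate, using ordinals of length $\kappa$, both a dense subset of $T$ (to eventually surject onto $T/M^2$) and a list of "candidate" pairs $(P,\,y)$ consisting of a height-one prime $P$ of $T$ meeting some $A_\alpha$ together with a prospective generator $y$. At each successor ordinal I would adjoin two types of elements: a coset representative chosen so that the final map to $T/M^2$ is onto, and a \emph{generic witness} chosen via a prime-avoidance argument in the spirit of Lemma~\ref{lem:pippa_prime_avoidance} to ensure the targeted height-one prime becomes principal while no new associated prime is acquired. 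At limit ordinals I would take unions. The depth $\geq 2$ hypothesis is what allows these generic witnesses to be chosen: it guarantees $M \notin \Ass(T)$ and supplies enough regular elements inside relevant ideals so that strong prime avoidance succeeds.

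After setting $A \coloneqq \bigcup_{\alpha<\kappa} A_\alpha$, the verification steps would be: use Proposition~\ref{prop:complete_machine} via the finitely-generated-ideal closure property and $T/M^2$-surjectivity to conclude $A$ is Noetherian with $\widehat{A} = T$; use the height-one-principal criterion (combined with the fact that each height-one prime $P$ of $A$ pulls back from some height-one prime of $T$ addressed at a stage where a principal generator was forced into $A$) to conclude $A$ is a UFD; and observe that the construction never adjoins a zero divisor modulo $pT$, so $A/pA \hookrightarrow T/pT$ is a domain, giving $pA$ prime. A cardinality count yields $\abs{A} \leq \kappa$, and the reverse inequality follows from $A$ surjecting onto $T/M$ together with $\aleph_0 \leq \abs{A}$.

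The hard part is the bookkeeping behind the "generic witness" step: when one adjoins an element to make a particular height-one prime principal, one may inadvertently create new height-one primes or destroy the UFD progress at previously handled primes. Managing this requires the careful interleaving found in Heitmann's original argument — indexing the list of height-one candidates in such a way that every height-one prime of the eventual $A$ appears and is resolved before the construction terminates, using transcendence over $A_\alpha$ modulo each maximal associated prime of $T$ to prevent the adjunction from introducing zero divisors or violating previously established principality. The depth $\geq 2$ hypothesis is precisely what ensures there is enough room in $T$ to make all of these avoidance choices simultaneously.
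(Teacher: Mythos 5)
This theorem is cited from Heitmann's paper rather than proved here, so the comparison must be with the argument in the cited source. Your scaffolding --- a transfinite chain of small quasi-local subrings, coset/prime avoidance to adjoin suitably transcendental elements, closure of finitely generated ideals, Proposition~\ref{prop:complete_machine} at the end, and the cardinality count --- has the right shape. The gap is in the two substantive claims. For the UFD property, you propose to enumerate height-one primes $P$ of $T$ meeting the chain and adjoin ``generic witnesses'' so that each such $P$ ``becomes principal.'' This is essentially backwards. If $Q$ is a non-principal height-one prime of $T$ and your construction forces enough of $Q$ into $A$ that $(Q\cap A)T=Q$, then $Q\cap A$ is a height-one prime of $A$ (it is nonzero, and going-down gives $\height(Q\cap A)\le 1$) which cannot be principal: $Q\cap A=aA$ would force $Q=aT$. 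So adjoining elements to ``resolve'' height-one primes of $T$ tends to manufacture counterexamples to the UFD property rather than establish it. Heitmann's mechanism is the opposite: he keeps the contractions small. His chain consists of ``N-subrings,'' which are by definition quasi-local UFDs $R$ with $\abs{R}\le\sup(\aleph_0,\abs{T/M})$, with $R\cap Q=(0)$ for all $Q\in\Ass(T)$, and with $\height(P\cap R)\le 1$ for every $P\in\Ass(T/tT)$ and every regular $t\in T$; every step of the construction (adjoining transcendentals, and adjoining $c/a$ in the ideal-closure step) is proved to preserve this invariant, so each ring in the chain is a UFD and factorizations into primes persist into the union. Your sketch is missing this inductive invariant, and the ``bookkeeping'' you defer to cannot be carried out without it.

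The argument for $pA$ prime is also a non sequitur: the embedding $A/pA\hookrightarrow T/pT$ does follow from $pT\cap A=pA$, but $T/pT$ is in general not a domain, and showing that this particular subring of it is a domain is exactly what needs proving. The correct route starts the chain at $\Z_{(p)}$, where $p$ is a prime element, and preserves the primeness of $p$ in each UFD of the chain. Finally, the role of $\depth T\ge 2$ is more specific than ``enough room for avoidance'': it guarantees that $M\notin\Ass(T/tT)$ for regular $t$, so that the primes one must avoid in the closure steps are all nonmaximal and the coset-avoidance lemma applies.
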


In particular, the proof of Lemma~\ref{lem:small17_noncat_ufd_suff} is as follows. Given a complete local ring $(T,M)$ satisfying the conditions of the lemma's hypothesis, adjoin generators of a particular prime ideal of $T$ to the prime subring of $T$. Then use this ring as the base ring in the proof of Theorem~\ref{thm:heitmann_UFD_suff}, in which elements from a set of cardinality $\abs{T/M^2}$ are adjoined to the base ring, so that the resulting ring $A$ has cardinality $\sup(\aleph_0,\abs{T/M})$. The ring $A$ can then be shown to be a noncatenary local UFD with completion $T$.

Using this outline, we identify sufficient conditions on a complete local ring to be the completion of a countable noncatenary local UFD.

\begin{proposition}\label{prop:noncat_ufd_suff}
Let $(T,M)$ be a complete local ring such that the following conditions are satisfied:
\begin{enumerate}
    \item no integer of $T$ is a zero divisor,
    \item $\depth T>1$,
    \item there exists $P\in\Min(T)$ such that $2<\dim(T/P)<\dim T$, and
    \item $T/M$ is countable.
\end{enumerate}
Then $T$ is the completion of a countable noncatenary local UFD.
\end{proposition}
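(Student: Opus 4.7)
The plan is to adapt the construction used to prove Lemma~\ref{lem:small17_noncat_ufd_suff} while tracking cardinalities, using the hypothesis that $T/M$ is countable to force the resulting UFD to be countable.

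First, I would use conditions (1)--(3) to locate an appropriate prime ideal $Q$ of $T$. Since $\depth T > 1$, we have $M \notin \Ass(T)$, and $T$ is catenary because it is complete. Combined with condition (3), which implies $1 < \dim(T/P) < \dim T$ for some $P \in \Min(T)$, Lemma~\ref{lem:prime_existence_codim1} yields $Q \in \Spec(T)$ with $\dim(T/Q) = 1$, $\height(Q) + \dim(T/Q) < \dim T$, and $Q \nsubseteq \frakp$ for every $\frakp \in \Ass(T)$. This ideal $Q$ is what will force noncatenarity of the ring we build: any local subring $A \subseteq T$ with $\widehat{A} = T$ that contains a generating set for $Q$ will satisfy $(Q \cap A)T = Q$, and the same argument as in the proof of Proposition~\ref{prop:noncat_domain_suff} will give $\dim(A/(Q \cap A)) = 1$ together with $\height(Q \cap A) + \dim(A/(Q \cap A)) < \dim A$, so that $A$ is noncatenary.

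Next, I would feed a countable base ring into Heitmann's construction. Let $R$ be the subring of $T$ obtained by adjoining a finite generating set for $Q$ to the prime subring of $T$ (which is $\Z$, since by (1) no integer of $T$ is a zero divisor); then $R$ is countable. The proof of Theorem~\ref{thm:heitmann_UFD_suff}, invoked in the proof of Lemma~\ref{lem:small17_noncat_ufd_suff} with $R$ as its base ring, constructs a local UFD $A$ with $R \subseteq A \subseteq T$ and $\widehat{A} = T$, by adjoining at most $\abs{T/M^2}$ additional elements of $T$. Since $T/M$ is countable, Proposition~\ref{prop:t/m^2_countable} gives that $T/M^2$ is countable, so the entire construction only adjoins countably many elements to a countable base ring. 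Consequently $A$ is countable, and by the noncatenarity observation in the previous paragraph $A$ is a countable noncatenary local UFD with $\widehat A = T$.

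The main obstacle is not really a mathematical one but a bookkeeping one: confirming that the Heitmann/\cite{SMALL17} construction can indeed start from the enlarged base ring $R$ (so that the generators of $Q$ persist into $A$), and that the number of elements adjoined during that construction is bounded by $\abs{T/M^2}$ rather than by $\abs{T}$. Both facts are built into the original constructions; the only new input required to adapt them is the observation that $\abs{T/M^2} = \aleph_0$ under hypothesis (4), so the use of \cite{SMALL17} Lemma 3.6 can be repeated verbatim with a countable base ring to yield the countable UFD we want.
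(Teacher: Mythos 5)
Your proposal is correct and follows the same route as the paper: both reduce to the construction of \cite{SMALL17}, Lemma 3.6 (a countable base ring, obtained by adjoining generators of a suitable prime of $T$ to the prime subring, fed into Heitmann's construction from Theorem~\ref{thm:heitmann_UFD_suff}), and both supply the one new ingredient, namely that hypothesis (4) together with Proposition~\ref{prop:t/m^2_countable} gives $\abs{T/M^2}=\abs{T/M}=\aleph_0$, so the resulting UFD has cardinality $\sup(\aleph_0,\abs{T/M})=\aleph_0$. One small caution: the prime ideal actually used in the UFD construction of \cite{SMALL17} is not the one produced by Lemma~\ref{lem:prime_existence_codim1} (there one takes $Q$ with $\dim(T/Q)=2$, matching hypothesis (3) here, and the choice interacts with the N-subring conditions in Heitmann's argument), but since you ultimately defer to running \cite{SMALL17}, Lemma 3.6 verbatim, this does not affect your argument.
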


\begin{proof}
We show that given the added assumption that $T/M$ is countable, the ring constructed in the proof of Lemma~\ref{lem:small17_noncat_ufd_suff} is countable. The ring in the proof of this lemma is initialized by taking the prime subring of $T$ and adjoining generators of a prime ideal of $T$. The resulting ring, which we call $R$, is countable, since the prime subring of $T$ is countable, and adjoining finitely many elements to the prime subring yields a countable ring. Then, to prove Lemma \ref{lem:small17_noncat_ufd_suff}, the authors of \cite{SMALL17} use $R$ as the base ring in Heitmann's construction in the proof of Theorem~\ref{thm:heitmann_UFD_suff}. The resulting ring $A$ is countable, since $\abs{T/M^2}=\abs{T/M}$ by Proposition~\ref{prop:t/m^2_countable} so $A$ has cardinality $\sup(\aleph_0,\abs{T/M})=\aleph_0$. Finally, it is shown, in the proof of \cite[Lemma 3.6]{SMALL17}, that $A$ is a noncatenary local UFD with completion $T$.
\end{proof}

We are now able to characterize completions of countable noncatenary local UFDs.

\begin{theorem}\label{thm:noncat_ufd_char}
Let $(T,M)$ be a complete local ring. Then $T$ is the completion of a countable noncatenary local UFD if and only if the following conditions hold:
\begin{enumerate}
    \item no integer of $T$ is a zero divisor,
    \item $\depth T>1$,
    \item there exists $P\in\Min(T)$ such that $2<\dim(T/P)<\dim T$, and
    \item $T/M$ is countable.
\end{enumerate}
\end{theorem}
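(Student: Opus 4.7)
The plan is to prove this as a direct consequence of the results already established in the paper, since both directions require very little additional work beyond citing earlier statements.

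For the sufficiency direction, I would simply apply Proposition~\ref{prop:noncat_ufd_suff}. If $(T,M)$ satisfies conditions (1)--(4), then by that proposition $T$ is the completion of a countable noncatenary local UFD, and we are done. So the forward direction of this theorem really is packaged entirely into the preceding proposition; no further construction is required here.

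For the necessity direction, suppose that $T$ is the completion of a countable noncatenary local UFD $(A, A \cap M)$. Then $A$ is in particular a noncatenary local UFD, so Theorem~\ref{thm:small17noncatufd_char} applies to $\widehat{A} = T$, yielding conditions (1), (2), and (3). Moreover, $A$ is a countable local domain (since every UFD is a domain), so Theorem~\ref{prop:small19_countable_domain} applied to $\widehat{A} = T$ yields that $T/M$ is countable, which is condition (4).

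Thus the proof is really just a bookkeeping step that assembles Proposition~\ref{prop:noncat_ufd_suff}, Theorem~\ref{thm:small17noncatufd_char}, and Theorem~\ref{prop:small19_countable_domain}. There is no real obstacle at this stage: the substantive content of the theorem was already absorbed into Proposition~\ref{prop:noncat_ufd_suff}, whose proof in turn relied on checking that Heitmann's construction in Theorem~\ref{thm:heitmann_UFD_suff}, when initialized with a countable base ring obtained by adjoining a finite generating set of an appropriate prime ideal to the prime subring of $T$, produces a ring of cardinality $\sup(\aleph_0, |T/M|) = \aleph_0$ by Proposition~\ref{prop:t/m^2_countable}. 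The proof of the final theorem is therefore a one-paragraph combination of previously established results.
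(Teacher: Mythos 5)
Your proposal is correct and matches the paper's proof exactly: sufficiency follows from Proposition~\ref{prop:noncat_ufd_suff}, and necessity follows by combining Theorem~\ref{thm:small17noncatufd_char} (for conditions (1)--(3)) with Theorem~\ref{prop:small19_countable_domain} (for condition (4)). No gaps.
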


\begin{proof}
If $T$ satisfies conditions (1), (2), (3), and (4), then by Proposition~\ref{prop:noncat_ufd_suff}, $T$ is the completion of a countable noncatenary local UFD. If $T$ is the completion of a countable noncatenary local UFD, then by Theorem~\ref{thm:small17noncatufd_char}, $T$ must satisfy conditions (1), (2) and (3). By Theorem~\ref{prop:small19_countable_domain}, $T$ must satisfy condition (4) as well.
\end{proof}

\begin{example}
By Theorem~\ref{thm:countable_noncat_domain_char}, an example of a complete local ring that is the completion of a countable noncatenary local domain is $\Q[[x,y,z,w]]/(x)\cap(y,z)$. By Theorem~\ref{thm:noncat_ufd_char}, the complete local ring $\Q[x,y_1,y_2,z_1,z_2]]/(x)\cap(y_1,y_2)$ is the completion of a countable noncatenary local UFD.
\end{example}

\section*{Acknowledgments}
We thank the Clare Boothe Luce Scholarship Program for supporting the research of the second author.

\begin{bibdiv}
\begin{biblist}


\bib{SMALL17}{article}{
   author={Avery, Chloe I.},
   author={Booms, Caitlyn},
   author={Kostolansky, Timothy M.},
   author={Loepp, S.},
   author={Semendinger, Alex},
   title={Characterization of completions of noncatenary local domains and
   noncatenary local UFDs},
   journal={J. Algebra},
   volume={524},
   date={2019},
   pages={1--18},
   issn={0021-8693},
   review={\MR{3902351}},
   doi={10.1016/j.jalgebra.2018.12.016},
}

\bib{SMALL19}{article}{
    author = {Barrett, Erica},
    author = {Graf, Emil} ,
    author = {Loepp, S.},
    author = {Strong, Kimball},
    author = {Zhang, Sharon},
    title = {Cardinalities of Prime Spectra of Precompletions},
    year = {2019},
    eprint={arXiv:1911.06648},
}

\bib{pippa}{article}{
   author={Charters, P.},
   author={Loepp, S.},
   title={Semilocal generic formal fibers},
   journal={J. Algebra},
   volume={278},
   date={2004},
   number={1},
   pages={370--382},
   issn={0021-8693},
   review={\MR{2068083}},
   doi={10.1016/j.jalgebra.2004.01.011},
}

\bib{dundon}{article}{
   author={Dundon, A.},
   author={Jensen, D.},
   author={Loepp, S.},
   author={Provine, J.},
   author={Rodu, J.},
   title={Controlling formal fibers of principal prime ideals},
   journal={Rocky Mountain J. Math.},
   volume={37},
   date={2007},
   number={6},
   pages={1871--1891},
   issn={0035-7596},
   review={\MR{2382631}},
   doi={10.1216/rmjm/1199649827},
}

\bib{heitmannUFD}{article}{
   author={Heitmann, Raymond C.},
   title={Characterization of completions of unique factorization domains},
   journal={Trans. Amer. Math. Soc.},
   volume={337},
   date={1993},
   number={1},
   pages={379--387},
   issn={0002-9947},
   review={\MR{1102888}},
   doi={10.2307/2154327},
}

\bib{heitmann}{article}{
   author={Heitmann, Raymond C.},
   title={Completions of local rings with an isolated singularity},
   journal={J. Algebra},
   volume={163},
   date={1994},
   number={2},
   pages={538--567},
   issn={0021-8693},
   review={\MR{1262718}},
   doi={10.1006/jabr.1994.1031},
}

\bib{lech}{incollection}{
    AUTHOR = {Lech, Christer},
     TITLE = {A method for constructing bad {N}oetherian local rings},
 BOOKTITLE = {Algebra, algebraic topology and their interactions
              ({S}tockholm, 1983)},
    SERIES = {Lecture Notes in Math.},
    VOLUME = {1183},
     PAGES = {241--247},
 PUBLISHER = {Springer, Berlin},
      YEAR = {1986},
}

\bib{loepp03}{article}{
   author={Loepp, S.},
   title={Characterization of completions of excellent domains of
   characteristic zero},
   journal={J. Algebra},
   volume={265},
   date={2003},
   number={1},
   pages={221--228},
   issn={0021-8693},
   review={\MR{1984908}},
   doi={10.1016/S0021-8693(03)00239-4},
}

\bib{loepp97}{article}{
   author={Loepp, S.},
   title={Constructing local generic formal fibers},
   journal={J. Algebra},
   volume={187},
   date={1997},
   number={1},
   pages={16--38},
   issn={0021-8693},
   review={\MR{1425557}},
   doi={10.1006/jabr.1997.6768},
}

\bib{loepp2018uncountable}{article}{
   author={Loepp, S.},
   author={Michaelsen, A.},
   title={Uncountable $n$-dimensional excellent regular local rings with
   countable spectra},
   journal={Trans. Amer. Math. Soc.},
   volume={373},
   date={2020},
   number={1},
   pages={479--490},
   issn={0002-9947},
   review={\MR{4042882}},
   doi={10.1090/tran/7921},
}

\bib{matsumura}{book}{
   author={Matsumura, Hideyuki},
   title={Commutative ring theory},
   series={Cambridge Studies in Advanced Mathematics},
   volume={8},
   edition={2},
   note={Translated from the Japanese by M. Reid},
   publisher={Cambridge University Press, Cambridge},
   date={1989},
   pages={xiv+320},
   isbn={0-521-36764-6},
   review={\MR{1011461}},
}

\bib{rotthaus}{article}{
   author={Rotthaus, Christel},
   title={Excellent rings, Henselian rings, and the approximation property},
   journal={Rocky Mountain J. Math.},
   volume={27},
   date={1997},
   number={1},
   pages={317--334},
   issn={0035-7596},
   review={\MR{1453106}},
   doi={10.1216/rmjm/1181071964},
}

\end{biblist}
\end{bibdiv}

\end{document}